\def \Z {\mathbb Z}
\def \R {\mathbb R}
\def \N {\mathbb N}
\def \E {\mathbb E}
\def\dd{\mathrm d}
\def\ee{\mathrm e}
\def\ii{\mathrm i}
\newcommand{\SpN}{\mathrm{Sp}_{\mathrm{N}}(\R)\,}
\newcommand{\spN}{\mathfrak{sp}_{\mathrm{N}}(\R)\,}
\newcommand{\supp}{\mathrm{supp}\;}
\newcommand{\Ho}{H(\omega)}
\newcommand{\Hl}{H_{\ell}(\omega)}
\newcommand{\LpSI}{$p$-contracting and $L_p$-strongly irreducible, for every $p\in \{1,\ldots,N\}$}
\newcommand{\aN}{\{1,\ldots,N\}}
\newcommand{\omO}{\omega^{(0)}}
\newcommand{\SN}{\mathrm{S}_{\mathrm{N}}(\R)\,}
\newcommand{\SV}{\mathcal{S}_{\mathrm{V}}}
\DeclareMathOperator{\dlO}{d_{\log\, \mathcal{O}}}
\newtheorem{thm}{Theorem}
\newtheorem{prop}{Proposition}
\newtheorem{cor}{Corollary}
\newtheorem{defi}{Definition}
\newtheorem{lem}{Lemma}
\newtheorem{rem}{Remark}
\begin{document}
       
\title[Anderson with generic interaction potential]{Localization for an Anderson-Bernoulli model with generic interaction potential}
\author{Hakim Boumaza} 
\email{boumaza@math.univ-paris13.fr}
\address{UMR CNRS 7539 - D\'epartement de Math\'ematiques \\
Institut Galil\'ee \\ 
Universit\'e PARIS 13 \\ 
99 avenue Jean-Baptiste Cl\'ement \\ 
93430 VILLETANEUSE \\
FRANCE}
\thanks{}
\begin{abstract}
We present a result of localization for a matrix-valued Anderson-Bernoulli operator, acting on $L^2(\R)\otimes \R^N$, for an arbitrary $N\geq 1$, whose interaction potential is generic in the real symmetric matrices. For such a generic real symmetric matrix, we construct an explicit interval of energies on which we prove localization, in both spectral and dynamical senses, away from a finite set of critical energies. This construction is based upon the formalism of the F\"urstenberg group to which we apply a general criterion of density in semisimple Lie groups. The algebraic nature of the objects we are considering allows us to prove a generic result on the interaction potential and the finiteness of the set of critical energies.

\end{abstract}


\maketitle
\section{Introduction}\label{sec_intro}

In this article, we will discuss a generic result on localization properties for the following random family of quasi one-dimensional Anderson-Bernoulli operators : 
\begin{equation}\label{eq_model_Hl}
\Hl = -\frac{\dd^2}{\dd x^2}\otimes I_{\mathrm{N}} + V + \sum_{n\in \Z}  \left(
\begin{smallmatrix}
c_1 \omega_{1}^{(n)} \mathbf{1}_{[0,\ell]}(x-\ell n) & & 0\\ 
 & \ddots &  \\
0 & & c_N \omega_{N}^{(n)} \mathbf{1}_{[0,\ell]}(x-\ell n)\\ 
\end{smallmatrix}\right),
\end{equation}
\noindent acting on $L^2(\R)\otimes \R^N$, where $N\geq 1$ is an integer, $I_{\mathrm{N}}$ is the identity matrix of order $N$ and $\ell>0$ is a real number. The matrix $V$ is a real $\mathrm{N}\times \mathrm{N}$ symmetric matrix, the space of these matrices being denoted by $\SN$. The constants $c_1,\ldots,c_N$ are non-zero real numbers.
\vskip 2mm

\noindent For every $i\in \aN$, the $(\omega_i^{(n)})_{n\in \Z}$ are sequences of independent and identically distributed \emph{i.i.d.} random variables on a complete probability space $(\widetilde{\Omega},\widetilde{\mathcal{A}}, \widetilde{\mathsf{P}})$, of common law $\nu$ such that $\{0,1\} \subset \supp \nu$ and $\supp \nu$ is bounded. In particular, the $\omega_i^{(n)}$'s can be Bernoulli random variables. The family $\{\Hl \}_{\omega \in \Omega}$ is a family of random operators indexed by the product space
$$(\Omega,\mathcal{A},\mathsf{P})=\left(\otimes_{n\in \Z}\widetilde{\Omega}^{\otimes N},\otimes_{n\in \Z}\widetilde{\mathcal{A}}^{\otimes N}, \otimes_{n\in \Z}\widetilde{\mathsf{P}}^{\otimes N}\right).$$

\noindent We also set, for every $n\in \Z$, {\small $\omega^{(n)}=(\omega_1^{(n)},\ldots,\omega_N^{(n)})$}, which is a random variable on {\small $(\widetilde{\Omega}^{\otimes N},\widetilde{\mathcal{A}}^{\otimes N}, \widetilde{\mathsf{P}}^{\otimes N})$} of law $\nu^{\otimes N}$. The expectancy against $\mathsf{P}$ will be denoted by $\E(.)$.
\vskip 2mm

\noindent As a bounded perturbation of $-\frac{\dd^2}{\dd x^2}\otimes I_{\mathrm{N}}$, the operator $\Hl$ is self-adjoint on the Sobolev space  $H^2(\R)\otimes \R^N$ and thus, for every $\omega\in \Omega$, the spectrum of $\Hl$, $\sigma(\Hl)$, is included in $\R$. Moreover, because of the periodicity in law of the random potential of $\Hl$, the family $\{\Hl \}_{\omega \in \Omega}$ is $\ell \Z$-ergodic. Thus, there exists $\Sigma \subset \R$ such that, for $\mathsf{P}$-almost every $\omega \in \Omega$, $\Sigma=\sigma(\Hl)$. There also exist $\Sigma_{\mathrm{pp}}$, $\Sigma_{\mathrm{ac}}$ and $\Sigma_{\mathrm{sc}}$, subsets of $\R$, such that, for $\mathsf{P}$-almost every $\omega \in \Omega$, $\Sigma_{\mathrm{pp}}=\sigma_{\mathrm{pp}}(\Hl)$, $\Sigma_{\mathrm{ac}}=\sigma_{\mathrm{ac}}(\Hl)$ and $\Sigma_{\mathrm{sc}}=\sigma_{\mathrm{sc}}(\Hl)$, respectively the pure point, absolutely continuous and singular continuous spectrum of $\Hl$.
\vskip 1mm

\noindent Our main result will be about localization properties of $\Hl$. Before stating it, we give the definitions of both exponential localization and dynamical localization for $\Hl$. We denote by $E_{\omega}(.)$ the spectral projection of the self-adjoint operator $\Hl$ and the $L^2$-norm is written as $||\ ||$.

\begin{defi}\label{def_localization}
Let $I\subset \R$ be an open interval. We say that :
\begin{itemize}
\item[(i)] $\Hl$ exhibits \emph{exponential localization} (EL) in $I$, if it has pure point spectrum in $I$ (\emph{i.e.}, $\Sigma\cap I=\Sigma_{\mathrm{pp}}\cap I$ and $\Sigma_{\mathrm{ac}}\cap I=\Sigma_{\mathrm{sc}}\cap I=\emptyset$) and, for $\mathsf{P}$-almost every $\omega\in \Omega$, the eigenfunctions of $\Hl$ with eigenvalues in $I$ decay exponentially in the $L^2$-sense (\emph{i.e.}, there exist $C$ and $m>0$ such that $||\mathbf{1}_{[x-\ell,x+\ell]} \psi || \leq C\ee^{-m|x|}$ for $\psi$ an eigenfunction of $\Hl$) ;
\item[(ii)] $\Hl$ exhibits \emph{strong dynamical localization} (SDL) in $I$, if $\Sigma \cap I \neq \emptyset$ and, for each compact interval $\tilde{I} \subset I$ and $\psi \in L^2(\R)\otimes \R^N$ with compact support, we have,
$$\forall n\geq 0,\ \E\left( \sup_{t\in \R} \left|\left| \left(\sqrt{1+|x|^2}\right)^{\frac{n}{2}} E_{\omega} (\tilde{I}) \ee^{-\ii t \Hl} \psi\right|\right|^2 \right) <\infty \ .$$
\end{itemize}
\end{defi}
\vskip 3mm

\noindent Before stating our main results, we need to introduce some more notations. Let $\SpN$ denote the group of $2\mathrm{N}\times 2\mathrm{N}$ real symplectic matrices. It is the subgroup of $\mathrm{GL}_{2\mathrm{N}}(\R)$ of matrices $M$ satisfying $$^tMJM=J,$$ 
where $J$ is the matrix of order $2N$ defined by $J=\bigl(\begin{smallmatrix}
0 & -I_{N} \\
I_{N} & 0
\end{smallmatrix}\bigr)$. Let $\mathcal{O}$ be the neighborhood of $I_{2\mathrm{N}}$ in $\SpN$ given by Theorem \ref{thm_BG03} applied to $G=\SpN$.

\noindent We set :
\begin{equation}\label{eq_def_dlogO}
\dlO=\max\{ R>0\ |\ B(0,R)\subset \log\,\mathcal{O} \},
\end{equation}
where $B(0,R)$ is the open ball, centered on $0$ and of radius $R>0$, for the topology induced on the Lie algebra $\spN$ of $\SpN$ by the matrix norm induced by the euclidean norm on $R^{2N}$. 

\noindent For {\small $\omO=(\omega_1^{(0)},\ldots,\omega_N^{(0)})\in \{0,1\}^N$}, let 
$$M_{\omO}(0,V) = V + \mathrm{diag}(c_1 \omega_1^{(0)},\ldots,c_N \omega_N^{(0)}).$$
As $M_{\omO}(0,V)\in \SN$, it has $\lambda_1^{\omO},\ldots ,\lambda_N^{\omO}$ as real eigenvalues. We set,
\begin{equation}\label{eq_def_lambda_minmax}
\lambda_{\mathrm{min}}=\min_{\omO\in \{ 0,1\}^N} \min_{1\leq i\leq N} \lambda_i^{\omO},\qquad \lambda_{\mathrm{max}}=\max_{\omO\in \{ 0,1\}^N} \max_{1\leq i\leq N} \lambda_i^{\omO}
\end{equation}
and $\delta=\frac{\lambda_{\mathrm{max}}-\lambda_{\mathrm{min}}}{2}$. We also set 
\begin{equation}\label{eq_def_lCV}
\ell_C:=\ell_C(N,V)=\min \left( 1, \frac{\dlO}{\delta}\right)
\end{equation}
and, for every $\ell\in (0,\ell_C)$,
\begin{equation}\label{eq_def_INVl}
I(N,V,\ell)=\left[ \lambda_{\mathrm{max}} - \frac{\dlO}{\ell},\lambda_{\mathrm{min}} + \frac{\dlO}{\ell} \right].
\end{equation}

\noindent We remark that, as $\ell$ tends to $0^+$, $I(N,V,\ell)$ tends to the whole real line. We can now state our main result.

\begin{thm}\label{thm_localization}
For almost every $V \in \SN$, there exists a finite set $\SV \subset \R$ such that, for every $\ell\in (0,\ell_C)$, if $I\subset I(N,V,\ell) \setminus \SV$ is an open interval with $\Sigma \cap I \neq \emptyset$, then $\Hl$ exhibits (EL) and (SDL) on $I$.
\end{thm}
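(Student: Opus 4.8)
The plan is to pass to the transfer-matrix cocycle, use Theorem \ref{thm_BG03} to force the F\"urstenberg group to be dense in $\SpN$ for generic $V$ away from finitely many energies, and then invoke the standard consequences of such density to deduce (EL) and (SDL). Concretely, I would first rewrite the generalized eigenvalue equation $\Hl\psi=E\psi$ as a first-order system. Since the random potential of $\Hl$ is constant, equal to $M_{\omega^{(n)}}(0,V)$, on each cell $[\ell n,\ell(n+1)]$, the transfer matrix over one cell at energy $E$ is
\begin{equation*}
T_{\omega^{(n)}}(E)=\exp\!\left(\ell\,\mathcal{M}_{\omega^{(n)}}(E,V)\right)\in\SpN,\qquad \mathcal{M}_{\omO}(E,V)=\begin{pmatrix}0 & I_N\\ M_{\omO}(0,V)-E\,I_N & 0\end{pmatrix}\in\spN .
\end{equation*}
Let $\mu_E$ be the common law of the $T_{\omega^{(n)}}(E)$ and $G_{\mu_E}\subset\SpN$ the associated F\"urstenberg group, the closed subgroup generated by $\supp\mu_E$. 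Since $\{0,1\}\subset\supp\nu$, we have $\{0,1\}^N\subset\supp(\nu^{\otimes N})$, so $G_{\mu_E}$ contains the subgroup $\Gamma_E$ generated by the $2^N$ matrices $T_{\omO}(E)$, $\omO\in\{0,1\}^N$; it therefore suffices to make $\Gamma_E$ dense in $\SpN$.

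I would then prove that $\Gamma_E$ is dense in $\SpN$ for almost every $V\in\SN$ and all but finitely many $E$. For $\ell\in(0,\ell_C)$ and $E\in I(N,V,\ell)$, every eigenvalue $\lambda_i^{\omO}$ of $M_{\omO}(0,V)$ satisfies $|\lambda_i^{\omO}-E|\le\dlO/\ell$, so one readily checks, using the definitions of $\ell_C$ and $I(N,V,\ell)$, that $\ell\,\mathcal{M}_{\omO}(E,V)\in B(0,\dlO)\subset\log\mathcal{O}$; hence each $T_{\omO}(E)$ lies in $\mathcal{O}$ and is the exponential of an element of $\log\mathcal{O}$. By Theorem \ref{thm_BG03} applied to $G=\SpN$, the group $\Gamma_E$ is dense in $\SpN$ as soon as the Lie subalgebra of $\spN$ generated by $\{\mathcal{M}_{\omO}(E,V):\omO\in\{0,1\}^N\}$ equals $\spN$ (the scalar $\ell$ is irrelevant for the generated Lie algebra). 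Here the algebraic structure enters decisively: each generator $\mathcal{M}_{\omO}(E,V)$ is affine in $E$ and in the entries of $V$, hence so is every iterated Lie bracket of the generators, and since $\spN$ is finite-dimensional the Lie algebra is generated by brackets of bounded depth; therefore ``the generated Lie algebra is a proper subalgebra of $\spN$'' amounts to the simultaneous vanishing of finitely many polynomials in $(V,E)$. Exhibiting one pair $(V_0,E_0)$ for which the generated Lie algebra is all of $\spN$ — for $N=1$ any $(V,E)$ works, and for general $N$ one iterates brackets of $\mathcal{M}_0(E,V)$ against the increments $\mathcal{M}_{e_i}(E,V)-\mathcal{M}_0(E,V)=c_i\bigl(\begin{smallmatrix}0&0\\ E_{ii}&0\end{smallmatrix}\bigr)$ on a convenient $V_0$ — shows that at least one of these polynomials, say $P_0$, is not identically zero. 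Then $\{V:P_0(V,\cdot)\equiv0\}$ has measure zero in $\SN$, and for $V$ outside it the nonzero one-variable polynomial $P_0(V,\cdot)$ has finitely many roots; taking $\SV$ to be this finite set (which does not depend on $\ell$), $\Gamma_E$, hence $G_{\mu_E}$ (which is closed), equals $\SpN$ for every $E\in I(N,V,\ell)\setminus\SV$. This yields at once the genericity in $V$ and the finiteness of $\SV$.

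Finally I would run the localization machinery. As $G_{\mu_E}=\SpN$, the F\"urstenberg group is \LpSI; hence, by the F\"urstenberg--Guivarc'h--Raugi and Gol'dsheid--Margulis theory of products of i.i.d.\ symplectic matrices, the Lyapunov exponents of the cocycle are simple and strictly positive on $I(N,V,\ell)\setminus\SV$, so there $\Sigma_{\mathrm{ac}}\cap I=\Sigma_{\mathrm{sc}}\cap I=\emptyset$ by Ishii--Pastur--Kotani theory, while Le Page's theorem provides local H\"older continuity of the Lyapunov exponents and of the integrated density of states, as well as exponential large-deviation estimates for the norms of finite products of transfer matrices. Since $\supp\nu$ is bounded, the single-site randomness is admissible for the quasi-one-dimensional multiscale analysis valid for H\"older-continuous (in particular Bernoulli) distributions: positivity of all Lyapunov exponents gives the initial length-scale estimate, the H\"older regularity of the density of states together with the large-deviation bounds gives the required a priori probabilistic estimate, and the multiscale induction then yields (EL) and (SDL) on every open $I\subset I(N,V,\ell)\setminus\SV$ with $\Sigma\cap I\neq\emptyset$.

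I expect the main obstacle to be the second step: actually producing the pair $(V_0,E_0)$ and pushing the Lie-bracket computation through for arbitrary $N$ — checking that, generically, brackets of $\mathcal{M}_0(E,V)$ with the diagonal increments exhaust $\spN$ — and keeping precise track of the excluded energies. The genericity-and-finiteness conclusion hinges on the fact that exactly on $I(N,V,\ell)$, where the one-cell transfer matrices sit inside the Breuillard--Gelander neighborhood $\mathcal{O}$, their logarithms are affine in $(V,E)$; the subsequent passage from density of $G_{\mu_E}$ to (EL) and (SDL) is, by contrast, an application of by-now-standard quasi-one-dimensional technology.
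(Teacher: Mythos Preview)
Your proposal is correct and follows essentially the same route as the paper: reduce to Lie-algebra generation via Breuillard--Gelander (Theorem~\ref{thm_BG03}), use the polynomial dependence of iterated brackets on $(V,E)$ to obtain genericity in $V$ and finiteness of $\SV$, and then feed $G(E)=\SpN$ into the quasi-one-dimensional localization machinery (packaged here as Theorem~\ref{thm_loc_mpag2} from \cite{boumaza_mpag2}). The pair $(V_0,E_0)$ you flag as the main obstacle is supplied in the paper by the tridiagonal matrix with zero diagonal and ones on the sub- and super-diagonals (any $E$), via \cite[Lemma~3]{boumaza_mpag2}; one small slip in your sketch is that Ishii--Pastur--Kotani theory yields only $\Sigma_{\mathrm{ac}}\cap I=\emptyset$, while $\Sigma_{\mathrm{sc}}\cap I=\emptyset$ is a consequence of the pure point spectrum obtained \emph{after} the multiscale analysis.
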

\vskip 3mm

\noindent Here, ``almost every'' is considered according to the Lebesgue measure on $\SN$ identified to $\R^{\frac{N(N+1)}{2}}$. We also remark that, as $I(N,V,\ell)$ tends to $\R$ when $\ell$ tends to $0^+$, taking $\ell\in (0,\ell_C)$ small enough ensure that we can always find a non-trivial open interval $I\subset  I(N,V,\ell) \setminus \SV$ such that $\Sigma \cap I \neq \emptyset$.
\vskip 3mm

\noindent This theorem will follow from the next proposition. For $E\in \R$, let $G(E)$ be the F\"urstenberg group associated to $\Hl$ (see Definition \ref{def_GE}).

\begin{prop}\label{prop_GE_SpN}
For almost every $V \in \SN$, there exists a finite set $\SV \subset \R$ such that, for every $\ell\in (0,\ell_C)$,
$$\forall E\in I(N,V,\ell) \setminus \SV,\ G(E)=\SpN.$$
\end{prop}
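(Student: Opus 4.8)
The strategy is to use the density criterion in semisimple Lie groups (Theorem \ref{thm_BG03}), which reduces the problem to showing that, for a well-chosen energy, the Fürstenberg group $G(E)$ contains enough transfer matrices to (i) be Zariski-dense in $\SpN$ and (ii) have an element close enough to $I_{2\mathrm{N}}$ to lie in the neighborhood $\mathcal{O}$. First I would write down explicitly the transfer matrices over one period: for $\omega^{(0)}\in\{0,1\}^N$, the one-step transfer matrix $A^{E}_{\omega^{(0)}}$ of $-\frac{\dd^2}{\dd x^2}\otimes I_{\mathrm N}+M_{\omega^{(0)}}(0,V)$ on $[0,\ell]$ is, in a basis diagonalizing $M_{\omega^{(0)}}(0,V)$, block-diagonal with $2\times 2$ blocks depending on $\ell\sqrt{E-\lambda_i^{\omega^{(0)}}}$ (trigonometric if $E>\lambda_i^{\omega^{(0)}}$, hyperbolic otherwise); these matrices lie in $\SpN$ because the underlying operator is a Schrödinger operator with symmetric potential. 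The Fürstenberg group $G(E)$ is the closed subgroup generated by $\{A^E_{\omega^{(0)}} : \omega^{(0)}\in\{0,1\}^N\}$ (using $\{0,1\}^N\subset\supp\nu^{\otimes N}$).

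The role of the interval $I(N,V,\ell)$ and of $\ell_C$ is exactly to produce the "near-identity" element: for $E\in I(N,V,\ell)$ one has $\ell\sqrt{|E-\lambda_i^{\omega^{(0)}}|}\le\ell\sqrt{\delta}\cdot(\text{something})$ bounded by $\dlO$ after taking logarithms, so that $\log A^E_{\omega^{(0)}}$ lies in the ball $B(0,\dlO)\subset\log\mathcal O$, hence $A^E_{\omega^{(0)}}\in\mathcal O$; this is where the definitions \eqref{eq_def_dlogO}, \eqref{eq_def_lCV}, \eqref{eq_def_INVl} are used. So condition (ii) of the criterion is automatic on all of $I(N,V,\ell)$, and the whole content is condition (i): Zariski-density of $G(E)$ in $\SpN$.

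For the Zariski-density I would proceed algebraically. Fix $\ell$. The entries of the generators $A^E_{\omega^{(0)}}$ are (after a change of variables, e.g. setting an auxiliary parameter) algebraic/analytic functions of $E$ and of the matrix $V$. Consider the Zariski closure $\overline{G(E)}$ inside $\mathrm{GL}_{2\mathrm N}$; one wants its Lie algebra to be all of $\spN$. The plan is: (a) show that the Lie algebra generated by the $\log$-generators, or by suitable commutators of the generators, together with the symplectic structure, is $\spN$ for all $E,V$ outside a proper algebraic subvariety — this is a "generic non-degeneracy" computation exploiting that $\spN$ is simple and that the blocks can be made to couple under conjugation because $V$ is a generic symmetric matrix (its eigenvectors are in general position relative to the canonical basis, so the different $2\times2$ blocks for different $\omega^{(0)}$ do not share a common invariant decomposition); (b) use that "$G(E)$ is not Zariski-dense" is a closed algebraic condition on the pair $(V,E)$, and that the bad set, being a proper subvariety (which one checks by exhibiting one single pair $(V_0,E_0)$ — or a perturbative/limiting argument as $\ell\to 0$ where the transfer matrices degenerate to a controllable family — where density holds), must for almost every $V$ intersect the fiber $\{V\}\times\R$ in a finite set; calling that set $\SV$ gives the statement. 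The generic-in-$V$ step also uses the Fubini-type argument: the total bad set $\mathcal B\subset\SN\times\R$ is a proper algebraic (or analytic, after the substitution) subset, so its projection has full-measure complement, and for $V$ in that complement $\mathcal B\cap(\{V\}\times I(N,V,\ell))$ is finite.

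The main obstacle is step (a): proving that for generic $(V,E)$ the group generated by the explicit block-diagonal-in-rotated-bases matrices is Zariski-dense in $\SpN$ rather than landing in a proper reductive subgroup (a product of smaller symplectic or unitary groups, or the stabilizer of some extra structure). The key leverage is that genericity of $V$ forces the $N$ eigen-directions of the various $M_{\omega^{(0)}}(0,V)$ to be mutually generic, so no nontrivial orthogonal/symplectic splitting of $\R^{2\mathrm N}$ is preserved by all generators; turning this heuristic into a clean algebraic statement — e.g. by identifying the possible proper algebraic subgroups of $\SpN$ containing a maximal torus-like element and ruling each out by a polynomial identity that fails generically — is the technical heart of the argument, and is presumably where the finiteness of $\SV$ (an intersection of the bad variety with a line) comes from.
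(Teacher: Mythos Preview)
Your overall architecture matches the paper's: apply Theorem~\ref{thm_BG03}, arrange the generators to lie in $\mathcal{O}$ via the choice of $\ell_C$ and $I(N,V,\ell)$, and then show their logarithms generate $\spN$ for $(V,E)$ outside a proper algebraic set, with the finiteness of $\SV$ coming from slicing that set at fixed $V$. But two concrete points are off and one key input is missing.

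First, you misread Theorem~\ref{thm_BG03}. It does \emph{not} say ``Zariski-dense plus one element near the identity''; it says that \emph{all} the generators $g_1,\ldots,g_m$ must lie in $\mathcal{O}$, and that $\log g_1,\ldots,\log g_m$ must generate the Lie algebra $\spN$. Zariski-density of $G(E)$ is neither the hypothesis nor the conclusion, and proving it would not let you invoke the theorem. Second, you overcomplicate the transfer matrices by diagonalising: on $[\ell n,\ell(n+1)]$ the system has constant coefficients, so the transfer matrix is \emph{exactly} $T_{\omega^{(0)}}(E)=\exp\bigl(\ell X_{\omega^{(0)}}(E,V)\bigr)$ with $X_{\omega^{(0)}}(E,V)=\bigl(\begin{smallmatrix}0&I_N\\ M_{\omega^{(0)}}(E,V)&0\end{smallmatrix}\bigr)\in\spN$. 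Hence, once $\ell X_{\omega^{(0)}}(E,V)\in\log\mathcal{O}$, one has $\log T_{\omega^{(0)}}(E)=\ell X_{\omega^{(0)}}(E,V)$ on the nose, and the Lie-algebra condition in Theorem~\ref{thm_BG03} becomes a \emph{polynomial} condition on $(E,V)$ directly, with no trigonometric/hyperbolic functions and no change of variables. Your square-root bound $\ell\sqrt{|E-\lambda_i^{\omega^{(0)}}|}$ is accordingly not the relevant quantity; the paper computes the operator norm $\|X_{\omega^{(0)}}(E,V)\|=\max\bigl(1,\max_i|\lambda_i^{\omega^{(0)}}-E|\bigr)$, and the interval $I(N,V,\ell)$ in \eqref{eq_def_INVl} is exactly the set where $\ell\|X_{\omega^{(0)}}(E,V)\|\le\dlO$ for all $\omega^{(0)}\in\{0,1\}^N$.

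Finally, the part you flag as ``the main obstacle'' --- ruling out proper subgroups by analysing eigen-directions --- is not how the paper proceeds. The paper shows (Lemma~\ref{lem_geom}) that the set of tuples in $(\spN)^{2^N}$ failing to generate $\spN$ is an affine algebraic variety, and then makes it \emph{proper} by exhibiting one explicit matrix $V_0$ (the tridiagonal matrix with zeros on the diagonal and ones on the sub/super-diagonals) for which $\{X_{\omega^{(0)}}(E,V_0)\}_{\omega^{(0)}\in\{0,1\}^N}$ generates $\spN$ for \emph{every} $E\in\R$; this is quoted from \cite[Lemma~3]{boumaza_mpag2}. With that single witness in hand, the polynomial $E\mapsto (Q_{i_0}\circ f)(E,V)$ is non-zero for each $V\notin\mathcal{V}$, and $\SV$ is its finite zero set. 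Your proposed classification of reductive subgroups is therefore unnecessary; what is needed instead is this concrete computation for $V_0$.
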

\vskip 2mm

\noindent In particular, Proposition \ref{prop_GE_SpN} will imply the separability of the Lyapunov exponents of $\Hl$ (see Definition \ref{def_lyap_exp}) and the absence of absolutely continuous spectrum in $I(N,V,\ell)$, for $\ell\in (0,\ell_C)$. 

\begin{cor}\label{cor_lyap_exp}
For almost every $V \in \SN$, there exists a finite set $\SV \subset \R$ such that, for every $\ell\in (0,\ell_C)$, the $N$ positive Lyapunov exponents of $\Hl$, $\gamma_1(E),\ldots,\gamma_N(E)$, verify: 
\begin{equation}\label{eq_cor_lyap_exp}
\forall E\in I(N,V,\ell)\setminus \SV, \quad \gamma_{1}(E)>\cdots > \gamma_{N}(E)>0.
\end{equation}
Therefore, $\Hl$ has no absolutely continuous spectrum in $I(N,V,\ell)$, \emph{i.e.}, for every $\ell\in (0,\ell_C)$, $\Sigma_{\mathrm{ac}} \cap I(N,V,\ell)=\emptyset$.
\end{cor}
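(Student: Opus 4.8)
The plan is to derive Corollary~\ref{cor_lyap_exp} from Proposition~\ref{prop_GE_SpN} in two essentially independent steps: first, the strict ordering and positivity of the Lyapunov exponents, through the F\"urstenberg-group formalism for products of i.i.d.\ symplectic matrices; second, the emptiness of the absolutely continuous spectrum, through Kotani theory for quasi-one-dimensional operators.

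For the first step, I would recall that the one-period transfer matrices of $\Hl$ at an energy $E$, say $A=A(E,\omO)$, form an i.i.d.\ sequence valued in $\SpN$, so that the Lyapunov exponents occur in opposite pairs $\gamma_1(E)\geq\cdots\geq\gamma_N(E)\geq 0\geq-\gamma_N(E)\geq\cdots\geq-\gamma_1(E)$ and the top $N$ of them are governed by the F\"urstenberg group $G(E)$ (Definitions~\ref{def_GE} and~\ref{def_lyap_exp}). Since $\supp\nu$ is bounded and $A(E,\omO)$ depends continuously on $\omO$ and $E$, the matrices $A$ are uniformly bounded, so the integrability $\E(\log^+\|A\|+\log^+\|A^{-1}\|)<\infty$ needed for the Lyapunov exponents to be well defined holds trivially. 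The algebraic input is the classical fact that the full symplectic group $\SpN$ is \LpSI. By Proposition~\ref{prop_GE_SpN}, for almost every $V\in\SN$ and every $\ell\in(0,\ell_C)$ one has $G(E)=\SpN$ for all $E\in I(N,V,\ell)\setminus\SV$; hence at each such $E$ the hypotheses of the Gol'dsheid--Margulis / Bougerol--Lacroix criterion for simplicity of the Lyapunov spectrum of i.i.d.\ symplectic products are satisfied. Applying that criterion successively for $p=1,\ldots,N$ (reading the $(N{+}1)$-st exponent of the symmetric list as $-\gamma_N(E)$, so that $\gamma_N(E)>\gamma_{N+1}(E)$ becomes $\gamma_N(E)>0$) yields $\gamma_1(E)>\gamma_2(E)>\cdots>\gamma_N(E)>0$, which is~\eqref{eq_cor_lyap_exp}.

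For the second step, I would invoke Kotani theory in the version adapted to matrix-valued ergodic (here continuum, quasi-one-dimensional) Schr\"odinger operators: the absolutely continuous spectrum $\Sigma_{\mathrm{ac}}$ of $\Hl$ is contained in the essential closure of the set $\mathcal Z=\{E\in\R\mid\gamma_N(E)=0\}$ of energies at which the Lyapunov spectrum fails to be maximally positive. By the first step, $\mathcal Z\cap I(N,V,\ell)\subset\SV$, which is finite and therefore Lebesgue-negligible; since the essential closure of a Lebesgue-null set is empty, we get $\Sigma_{\mathrm{ac}}\cap I(N,V,\ell)=\emptyset$ for every $\ell\in(0,\ell_C)$, as claimed.

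The step I expect to be the main obstacle is not a computation but the careful matching of these two external theorems to the present model: for the Lyapunov step, checking that the continuum i.i.d.\ cocycle attached to~\eqref{eq_model_Hl} genuinely fits the framework of the simplicity criterion and that $\SpN$ indeed enjoys the $p$-contraction and $L_p$-strong-irreducibility properties at every level $p\in\{1,\ldots,N\}$; for the Kotani step, pinning down the precise statement valid for this continuum strip operator, together with the exact implication that the absence of a vanishing Lyapunov exponent rules out absolutely continuous spectrum. Both facts are standard, but the write-up must quote them in a form that covers the operator~\eqref{eq_model_Hl}.
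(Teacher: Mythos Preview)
Your proposal is correct and follows essentially the same route as the paper: from Proposition~\ref{prop_GE_SpN} you use that $\SpN$ is $p$-contracting and $L_p$-strongly irreducible to invoke the Bougerol--Lacroix simplicity criterion (what the paper cites as \cite[Proposition~IV.3.4]{BL85}), and then apply Kotani theory \cite{KS88} after noting that $\SV$ is finite hence Lebesgue-null. Your write-up is in fact slightly more detailed than the paper's---you spell out the integrability hypothesis and the essential-closure mechanism---but the two arguments are the same.
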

\vskip 5mm

It is already known in the scalar-valued case (corresponding here to $N=1$) that, away from a discrete set $\mathcal{S}$ of critical energies, there is exponential localization and strong dynamical localization on every compact interval $I\subset \R\setminus \mathcal{S}$ with $\Sigma \cap I \neq \emptyset$ (see \cite{DSS02}). But, in dimension $d$ higher than $2$, the question of the localization remains mostly open for Anderson-Bernoulli models. Such an Anderson-Bernoulli model is given by a family of random operators of the form
\begin{equation}\label{eq_def_H}
\Ho=-\Delta_{d} + \sum_{n\in \Z^d}\omega_n V(x-n),
\end{equation}
acting on $L^2(\R^d)\otimes \R$, where $V$ is supported in $[0,1]^d$ and the $\omega_n$ are \emph{i.i.d.} Bernoulli random variables. Since \cite{BK05}, it is known that there is exponential localization at the bottom of the almost sure spectrum of $\Ho$. In dimension $d\geq 3$, it is commonly conjectured that for high energies, there exist extended states, as for dimension $d=2$ it is conjectured that there is localization at every energies, except maybe those in a discrete set.

To tackle the question of localization for $d=2$, we can start by looking at a slightly simpler model, a continuous strip $\R\times [0,1]$ in $\R^2$. This model is given by the restriction $H_{\mathrm{cs}}(\omega)$ of $\Ho$ to $L^2(\R \times [0,1])$, with Dirichlet boundary conditions on $\R \times \{0 \}$ and $\R \times \{1\}$. This model can be used to study transport properties of nanoconductors so it is also of physical interest. The question of the localization at all energies for $H_{\mathrm{cs}}(\omega)$ present difficulties of the same level as for $\Ho$, mostly due to the PDE's nature of the problem in both cases. But, for $H_{\mathrm{cs}}(\omega)$, we have a possible approach by operating a discretization in the bounded direction of the strip. This can be performed by first applying discrete  Fourier transform in the second variable corresponding to the bounded direction, which leads to a quasi one-dimensional model with an infinite size matrix for potential. Then, by applying a cut-off in the space of Fourier frequencies, we obtain a quasi one-dimensional model with a matrix of finite order $N$ for potential, acting on $L^2(\R)\otimes \R^N$,  with $N\geq 1$ an integer. It turns the nature of the initial PDE's problem to an ODE's one, which allows to use formalism such as transfer matrices and Lyapunov exponents. The model (\ref{eq_model_Hl}) we are looking at here is not exactly the one obtained by this discretization procedure, but the understanding of localization properties for (\ref{eq_model_Hl}) should lead us to the same understanding for the discretize operator obtained from $H_{\mathrm{cs}}(\omega)$.
\vskip 3mm

We finish this introduction by giving the outline of the article. In Section \ref{sec_trans_mat}, we present the formalism of transfer matrices and compute them for $\Hl$. We also define the Lyapunov exponents and the F\"urstenberg group associated to $\Hl$. In Section \ref{sec_lie_algebra}, we study the Lie algebra generated by the matrices $X_{\omO}(E,V)$ defined at (\ref{eq_def_XEV}). In this section we also prove the genericity argument and we construct the finite set $\SV$ of Theorem \ref{thm_localization}, Proposition \ref{prop_GE_SpN} and Corollary \ref{cor_lyap_exp}. This genericity argument is mostly based upon algebraic geometry considerations and the Lebesgue measure of affine algebraic manifolds. In Section \ref{sec_proof_prop1}, we prove Proposition \ref{prop_GE_SpN} and Corollary \ref{cor_lyap_exp} and we explicitely construct $\ell_C$ and $I(N,V,\ell)$ for $\ell\in (0,\ell_C)$. The proofs of this section are based upon a general result on Lie groups due to Breuillard and Gelander (see Theorem \ref{thm_BG03}). In Section \ref{sec_proof_loc}, we recall localization results of \cite{boumaza_mpag2} and we deduce from them the proof of Theorem \ref{thm_localization}. Finally, in Section \ref{sec_ids}, we state a result of existence and regularity of the integrated density of states associated to $\Hl$.
\vskip 3mm

The general idea of the proof of Theorem \ref{thm_localization} can be briefly sketched. First we change the initial spectral and dynamical problem of the localization into a topological problem on proving that a Lie group with a finite number of generators is dense in the real symplectic group $\SpN$, which is the statement of Proposition \ref{prop_GE_SpN}. Then, we use the general criterion on Lie groups of Breuillard and Gelander to transform this topological problem into a purely algebraic problem on generating the Lie algebra $\spN$. The algebraic nature of the objects we are considering at this last step allows us to prove a generic result on $V$ and the finiteness of the set $\SV$ of critical energies.

\section{Transfer matrices and the F\"urstenberg group}\label{sec_trans_mat}

Let $E\in \R$. We want to understand the exponential asymptotic behaviour of a solution $u:\R\to \R^N$ of the second order differential system
\begin{equation}\label{eq_system_order2}
\Hl u=Eu.
\end{equation}
For this, we transform (\ref{eq_system_order2}) into an Hamiltonian differential system of order $1$ and we introduce the transfer matrix $T_{\omega^{(n)}}(E)$ of $\Hl$ from $\ell n$ to $\ell (n+1)$ which maps a solution $(u,u')$ of the order $1$ system at time $\ell n$ to the solution at time $\ell (n+1)$. The transfer matrix  $T_{\omega^{(n)}}(E)$ is therefore defined by the relation
\begin{equation}\label{eq_def_transfer_mat}
\forall n\in \Z,\ \left( \begin{array}{c}
u(\ell(n+1)) \\
u'(\ell(n+1)) 
\end{array} \right) = T_{\omega^{(n)}}(E) \left( \begin{array}{c}
u(\ell n) \\
u'(\ell n) 
\end{array} \right).
\end{equation}
As the system of order $1$ is Hamiltonian, the transfer matrix $T_{\omega^{(n)}}(E)$ lies into the symplectic group $\SpN$. The sequence $(T_{\omega^{(n)}}(E))_{n\in \Z}$ is also a sequence of \emph{i.i.d.} symplectic matrices because of the \emph{i.i.d.} character of the $\omega_i^{(n)}$'s and the non-overlapping of these random variables. By iterating the relation (\ref{eq_def_transfer_mat}) we get the asymptotic behaviour of $(u,u')$. To get the \emph{exponential} asymptotic behaviour of $(u,u')$ we can define the exponential growth (or decay) exponents of the product of random matrices $T_{\omega^{(n-1)}}(E)\ldots T_{\omega^{(0)}}(E)$.

\begin{defi}\label{def_lyap_exp}
Let $E\in \R$. The Lyapunov exponents $\gamma_{1}(E),\ldots,\gamma_{2N}(E)$, associated to the sequence $(T_{\omega^{(n)}}(E))_{n\in \Z}$, are defined inductively by
\begin{equation}\label{eq_def_lyap_exp}
\sum_{i=1}^{p} \gamma_{i}(E) = \lim_{n \to \infty} \frac{1}{n}
\mathbb{E}(\log ||\wedge^{p} (T_{\omega^{(n-1)}}(E)\ldots T_{\omega^{(0)}}(E))||),
\end{equation}
for every $p\in \{1,\ldots,2N\}$.
\end{defi}

\noindent Here, $\wedge^{p} M$ denotes the $p$th exterior power of the matrix $M$, acting on the $p$th exterior power of $\R^{2N}$. One has $\gamma_{1}(E)\geq \ldots \geq \gamma_{2N}(E)$. Moreover, due to the symplecticity of the random matrices $T_{\omega^{(n)}}(E)$, we have the symmetry property $\gamma_{2N-i+1}= -\gamma_{i}$, for every $i \in \aN$. Thus, we will only have to study the $N$ first Lyapunov exponents to obtain Corollary \ref{cor_lyap_exp}. To prove the separability of the Lyapunov exponents, we introduce the group which contains all the different products of transfer matrices, the so-called F\"urstenberg group.

\begin{defi}\label{def_GE}
For every $E\in \R$, the \emph{F\"urstenberg group} of $\Hl$ is defined by 
$$G(E)=\overline{<\supp \mu_E>},$$
where $\mu_E$ is the common distribution of the $T_{\omega^{(n)}}(E)$ and the closure is taken for the usual topology in $\SpN$.
\end{defi}

\noindent As the $T_{\omega^{(n)}}(E)$ are \emph{i.i.d.}, $\mu_E=(T_{\omO}(E))_{*}\, \nu^{\otimes N}$ and we have the internal description of $G(E)$ :
\begin{equation}\label{eq_GE_intern}
\forall E\in \R,\ G(E)=\overline{<T_{\omO} (E)\ |\ \omO \in \supp \nu^{\otimes N}>}.
\end{equation}
As $\{0,1\}\subset \supp \nu$, we also have 
\begin{equation}\label{eq_GE_inclusion}
\overline{<T_{\omO} (E)\ |\ \omO \in \{0,1\}^N >}\subset G(E).
\end{equation}
We will denote be $G_{\{0,1\}}(E)$ the subgroup of $G(E)$ with $2^N$ generators :
\begin{equation}\label{eq_def_G01E}
G_{\{0,1\}}(E)=<T_{\omO} (E)\ |\ \omO \in \{0,1\}^N >.
\end{equation}
\vskip 2mm

\noindent In Section \ref{sec_proof_prop1}, we will prove that, for almost every $V\in \SN$ and for all $E\in \R$ except those in a finite set, $G_{\{0,1\}}(E)$ is dense in $\SpN$.
\vskip 3mm

\noindent We finish this section by giving the explicit form of the transfer matrices $T_{\omega^{(n)}}(E)$. Let $V\in \SN$, $E\in \R$, $n\in \Z$ and $\omega^{(n)}\in \tilde{\Omega}^{\otimes N}$. We set : 
\begin{equation}\label{eq_def_ME}
M_{\omega^{(n)}}(E,V)=V+\mathrm{diag}(c_1 \omega_1^{(n)},\ldots,c_N \omega_N^{(n)})-EI_{\mathrm{N}}. 
\end{equation}
\noindent Then, we set the following matrix of the Lie algebra $\spN$, 
\begin{equation}\label{eq_def_XEV}
X_{\omega^{(n)}}(E,V)=\left( \begin{array}{cc}
0 & I_{\mathrm{N}} \\
M_{\omega^{(n)}}(E,V) & 0
\end{array}\right) \in \spN \subset \mathcal{M}_{\mathrm{2N}}(\R).
\end{equation}
By solving the constant coefficients system (\ref{eq_system_order2}) on $[\ell n,\ell (n+1)]$, we have :
\begin{equation}\label{eq_expression_mat_transfer}
\forall \ell >0,\ \forall n\in \Z,\ \forall V\in \SN,\ \forall E\in \R,\ T_{\omega^{(n)}}(E)=\exp\left(\ell X_{\omega^{(n)}}(E,V)\right).
\end{equation}
\vskip 2mm

\noindent It is important here to notice that $T_{\omega^{(n)}}(E)$ is the exponential of a matrix, as it will be crucial to be able to apply Theorem \ref{thm_BG03} to the subgroup $G_{\{0,1\}}(E)$.

\section{The Lie algebra generated by {\small $\{X_{\omO}(E,V)\ |\ \omO\in \{0,1\}^N\}$}}\label{sec_lie_algebra}

In this section we will present in details the proof of the genericity argument needed to prove Proposition \ref{prop_GE_SpN} and Theorem \ref{thm_localization}. We start by looking at the geometry of the set of $k$-uples in $\spN$ which do not generates $\spN$ in the sense of Lie algebras.

\begin{lem}\label{lem_geom}
Let $k\in \N^*$ and
{\small \begin{equation}\label{eq_def_Vk}
\mathcal{V}_k=\left\{ (X_1,\ldots,X_k)\in (\spN)^k \ |\ (X_1,\ldots,X_k)\ \mathrm{does}\ \mathrm{not}\ \mathrm{generate}\ \spN \right\}.
\end{equation}}
Then, there exist $Q_{r_1},\ldots,Q_{r_k}\in \R[(\spN)^k]$ such that :
{\small \begin{equation}\label{eq_lem_Vk}
\mathcal{V}_k=\left\{ (X_1,\ldots,X_k)\in (\spN)^k \ |\ Q_{r_1}(X_1,\ldots,X_k)=0,\ldots, Q_{r_k}(X_1,\ldots,X_k)=0 \right\}.
\end{equation}}
\end{lem}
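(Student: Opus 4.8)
The plan is to exhibit $\mathcal{V}_k$ as an algebraic subvariety of $(\spN)^k \cong \R^{d k}$, where $d = \dim \spN = N(2N+1)$, by writing down explicit polynomial equations that detect the failure of a $k$-tuple to Lie-generate $\spN$. First I would recall that the Lie subalgebra generated by $(X_1,\ldots,X_k)$ is spanned by all iterated brackets $[X_{i_1},[X_{i_2},[\ldots,X_{i_m}]\ldots]]$ of the generators, and that each such bracket, for a fixed word $w = (i_1,\ldots,i_m)$, is a polynomial (in fact multilinear in the blocks, hence polynomial) map $B_w : (\spN)^k \to \spN$ in the entries of $X_1,\ldots,X_k$, because the bracket on matrices $(A,B)\mapsto AB-BA$ is bilinear. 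Since $\spN$ is finite-dimensional, the subalgebra is already spanned by brackets of length at most $d$; let $w_1,\ldots,w_M$ enumerate this finite family of words, so that $(X_1,\ldots,X_k)$ generates $\spN$ if and only if the vectors $B_{w_1}(X),\ldots,B_{w_M}(X)$ span $\spN$, i.e.\ if and only if the $d\times M$ matrix $\mathcal{B}(X)$ whose columns are the coordinates of these brackets in a fixed basis of $\spN$ has rank $d$.

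The second step is to convert the rank condition into polynomial equations. The tuple fails to generate precisely when $\mathrm{rank}\,\mathcal{B}(X) \leq d-1$, which is equivalent to the vanishing of all $d\times d$ minors of $\mathcal{B}(X)$. Each such minor is a polynomial in the entries of $X_1,\ldots,X_k$ — hence an element of $\R[(\spN)^k]$ once we identify $(\spN)^k$ with affine space via a basis — so $\mathcal{V}_k$ is cut out by this (a priori large, but finite) collection of polynomials. This already shows $\mathcal{V}_k$ is an affine algebraic variety; to get exactly $k$ defining equations as in the statement, I would invoke the standard fact that any algebraic subset of $\R^{dk}$ defined by finitely many polynomials $P_1,\ldots,P_s$ is also the common zero set of at most $\dim$-of-the-ambient-space-plus-one suitably chosen combinations — but more efficiently here, one uses that over $\R$ a finite set of real polynomials $\{P_j\}$ has the same zero set as the single polynomial $\sum_j P_j^2$, or as any $\min(\cdot)$-type truncation. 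To land on exactly $k$ equations $Q_{r_1},\ldots,Q_{r_k}$ one can group the minors into $k$ bins and replace each bin by the sum of squares of its members; relabeling the resulting $k$ polynomials as $Q_{r_1},\ldots,Q_{r_k}$ gives the displayed description. (Alternatively, if $k < $ the number of minors one still has freedom: any $k \geq 1$ works by padding with zero polynomials, so the content is really just that $\mathcal{V}_k$ is Zariski closed.)

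The main obstacle, or rather the only place that needs care, is the \emph{finiteness} of the generating family of bracket-words: one must argue that brackets of bounded length suffice. This follows because the ascending chain of subspaces $V_1 = \mathrm{span}(X_i) \subseteq V_2 = V_1 + [V_1,V_1] \subseteq \cdots$ stabilizes as soon as $V_{m+1} = V_m$, which must happen by step $m \leq \dim \spN$ since the dimensions strictly increase until stabilization; and once it stabilizes it equals the generated subalgebra. Hence only words of length $\leq d$ are needed, the family $\{w_1,\ldots,w_M\}$ is finite (indeed $M \leq k + k^2 + \cdots + k^d$), and $\mathcal{B}(X)$ has a fixed finite size, so its minors form a finite set of genuine polynomials. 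I would also note that membership $X_{\omO}(E,V) \in \spN$ and the bracket staying in $\spN$ are automatic since $\spN$ is a Lie algebra, so all the $B_w$ indeed take values in $\spN$ and the coordinates in a basis of $\spN$ make sense; this is what justifies working in $(\spN)^k$ rather than in $\mathcal{M}_{2N}(\R)^k$. Assembling these observations yields the lemma.
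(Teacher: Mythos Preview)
Your argument is correct. Both you and the paper reduce the problem to the vanishing of determinants built from iterated brackets, but you diverge on how to pass to a \emph{finite} defining family. The paper keeps the full countable list of bracket words $\{Y_1,Y_2,\ldots\}$, forms all $(2N^2+N)$-fold determinants $Q_m$, and then invokes the Noetherian property of $\R[T_1,\ldots,T_{k(2N^2+N)}]$ to cut the ideal $I(\{Q_m\})$ down to finitely many generators. You instead bound the bracket length a priori by $d=\dim\spN$ via the ascending chain $V_1\subseteq V_2\subseteq\cdots$, so that the matrix $\mathcal{B}(X)$ has a fixed finite number of columns and its $d\times d$ minors already give a finite polynomial family; you then reach exactly $k$ equations by the real sum-of-squares trick. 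Your route is more elementary and constructive (no Hilbert basis theorem, and one can in principle write the polynomials down), while the paper's Noetherian argument is slicker in that it sidesteps any discussion of how long the brackets need to be. You are also right that the precise number $k$ of equations is cosmetic: the substance of the lemma, as used downstream in Lemmas~\ref{lem_VE_Leb} and~\ref{lem_SV_finite}, is only that $\mathcal{V}_k$ is Zariski closed and hence cut out by some finite set of polynomials.
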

\vskip 1mm

\noindent Thus, $\mathcal{V}_k$ is the affine algebraic manifold of $\{Q_{r_1},\ldots,Q_{r_k}\}$ which will be denoted by $V(\{Q_{r_1},\ldots,Q_{r_k} \})$. We will also use the identification :
\begin{equation}\label{eq_id_poly}
\R[(\spN)^k]\simeq \R[T_1,\ldots,T_{k(2N^2 +N)}]. 
\end{equation}

\begin{proof}
Let $(X_1,\ldots,X_k)\in (\spN)^k$ and $\mathrm{Lie}\{X_1,\ldots ,X_k\}$ be the Lie algebra generated by $X_1,\ldots, X_k$. If we denote by $\{Y_1,\ldots,Y_l,\ldots \}$ the countable set of all the successives brackets constructed from $\{X_1,\ldots,X_k\}$, we have
\begin{equation}\label{eq_lem_Vk_1}
\mathrm{Lie}\{X_1,\ldots ,X_k\}=\mathrm{span}(\{Y_1,\ldots,Y_l,\ldots \}),
\end{equation}
the vector space spanned by $\{Y_1,\ldots,Y_l,\ldots \}$. Then we have :
\begin{equation}\label{eq_lem_Vk_2}
\mathrm{Lie}\{X_1,\ldots ,X_k\}\neq \spN \ \Leftrightarrow \ \mathrm{rk}(\{Y_1,\ldots,Y_l,\ldots \}) < 2N^2+N,
\end{equation}
as $\mathrm{dim}\; \spN =2N^2+N$. At each $Y_l\in \spN$ we associate $\tilde{Y}_l \in \R^{2N^2+N}$ whose coefficients are those which define the matrix $Y_l$. The coefficients of $\tilde{Y}_l$ are polynomial in the $k(2N^2+N)$ coefficients which define the matrices $X_1,\ldots X_k$. For $m\in (\N^*)^{2N^2+N}$, we set
\begin{equation}\label{eq_lem_Vk_3}
Q_m(X_1,\ldots,X_k)=\det (\tilde{Y}_{m_1},\ldots, \tilde{Y}_{m_{2N^2+N}}) \in \R[(\spN)^k].
\end{equation}
Then,
\begin{equation}\label{eq_lem_Vk_4}
\mathrm{rk}(\{Y_1,\ldots,Y_l,\ldots \}) < 2N^2+N\ \Leftrightarrow \ \forall m\in (\N^*)^{2N^2+N}, Q_m(X_1,\ldots X_k)=0.
\end{equation}
Thus,
\begin{equation}\label{eq_lem_Vk_5}
\mathcal{V}_k = \bigcap_{ m\in (\N^*)^{2N^2+N}} \left\{  (X_1,\ldots,X_k)\in (\spN)^k \ |\ Q_m(X_1,\ldots,X_k)=0 \right\}.
\end{equation}
With the definition of the affine algebraic manifold, we can rewrite (\ref{eq_lem_Vk_5}) as :
\begin{equation}\label{eq_lem_Vk_6}
\mathcal{V}_k = V(\{ Q_m\ |\  m\in (\N^*)^{2N^2+N} \}).
\end{equation}
But, if $I(\{ Q_m\ |\  m\in (\N^*)^{2N^2+N} \})$ denote the ideal generated by the family $\{ Q_m\ |\  m\in (\N^*)^{2N^2+N} \}$, we have :
\begin{equation}\label{eq_lem_Vk_7}
V(\{ Q_m\ |\  m\in (\N^*)^{2N^2+N} \})=V(I(\{ Q_m\ |\  m\in (\N^*)^{2N^2+N} \})).
\end{equation}
As the ring $\R[T_1,\ldots,T_{k(2N^2 +N)}]$ is Noetherian, $I(\{ Q_m\ |\  m\in (\N^*)^{2N^2+N} \})$ is of finite type, \emph{i.e.} there exist $r_1,\ldots,r_k\in (\N^*)^{2N^2+N}$ such that,
\begin{equation}\label{eq_lem_Vk_8}
I(\{ Q_m\ |\  m\in (\N^*)^{2N^2+N} \})=I(\{Q_{r_1},\ldots,Q_{r_k} \}).
\end{equation}
Finally,
\begin{equation}\label{eq_lem_Vk_9}
\mathcal{V}_k = V(I(\{Q_{r_1},\ldots,Q_{r_k} \}))=V(\{Q_{r_1},\ldots,Q_{r_k} \}).
\end{equation}
\end{proof}
\vskip 3mm

\noindent For $E\in \R$ and $V\in \SN$, we will reindex the family {\small $\{ X_{\omO}(E,V)\}_{\omO \in \{0,1\}^N}$ } as $(X_1(E,V),\ldots,X_{2^N}(E,V))$. Let $E\in \R$ be fixed and let
\begin{equation}\label{eq_def_VE}
\mathcal{V}_{(E)}=\left\{ V\in \SN \ |\ (X_1(E,V),\ldots,X_{2^N}(E,V))\ \mathrm{does}\ \mathrm{not}\ \mathrm{generate}\ \spN \right\}.
\end{equation}

\begin{lem}\label{lem_VE_Leb}
We have, $\mathrm{Leb}_{\frac{N(N+1)}{2}}(\mathcal{V}_{(E)})=0$.
\end{lem}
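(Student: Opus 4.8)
The plan is to exploit the algebraic structure established in Lemma~\ref{lem_geom}. By that lemma, with $k=2^N$, the set $\mathcal{V}_{2^N}$ of non-generating tuples in $(\spN)^{2^N}$ is the affine algebraic manifold $V(\{Q_{r_1},\ldots,Q_{r_{2^N}}\})$ cut out by finitely many polynomials. Now fix $E\in\R$ and consider the polynomial map $\Phi_E\colon \SN \to (\spN)^{2^N}$ sending $V \mapsto (X_1(E,V),\ldots,X_{2^N}(E,V))$. Each entry of each $X_{\omO}(E,V)$ is an affine function of the entries of $V$ (indeed $M_{\omO}(E,V)=V+\mathrm{diag}(c_1\omega_1^{(0)},\ldots,c_N\omega_N^{(0)})-EI_{\mathrm N}$ depends affinely on $V$), so $\Phi_E$ is a polynomial (in fact affine) map in the $\frac{N(N+1)}{2}$ coordinates of $V$. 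By definition, $\mathcal{V}_{(E)} = \Phi_E^{-1}(\mathcal{V}_{2^N})$, and therefore $\mathcal{V}_{(E)}$ is itself the zero set in $\SN \simeq \R^{\frac{N(N+1)}{2}}$ of the finitely many polynomials $Q_{r_j}\circ\Phi_E$. So $\mathcal{V}_{(E)}$ is an affine algebraic subvariety of $\R^{\frac{N(N+1)}{2}}$.

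The next step is the standard dichotomy for algebraic subsets: an affine algebraic subset of $\R^d$ either is all of $\R^d$ or has Lebesgue measure zero (a proper algebraic subset is contained in the zero set of a single nonzero polynomial, which is a closed set with empty interior and measure zero, by, e.g., an induction on $d$ or Fubini). Hence it suffices to exhibit \emph{one} matrix $V_0\in\SN$ for which $(X_1(E,V_0),\ldots,X_{2^N}(E,V_0))$ does generate $\spN$; then $\mathcal{V}_{(E)}\neq\SN$, so it must have measure zero, which is exactly the claim.

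The construction of such a $V_0$ is where the real work lies, and it is the main obstacle. The idea is to compute brackets of the $X_{\omO}(E,V)$ for a cleverly chosen $V_0$ — for instance a diagonal matrix $V_0=\mathrm{diag}(a_1,\ldots,a_N)$ with the $a_i$ chosen so that the $2^N$ matrices $M_{\omO}(E,V_0)=\mathrm{diag}(a_1+c_1\omega_1^{(0)}-E,\ldots,a_N+c_N\omega_N^{(0)}-E)$ are distinct diagonal matrices with all diagonal entries pairwise distinct and nonzero, avoiding any exceptional relations. With $V_0$ diagonal, each $X_{\omO}(E,V_0)=\bigl(\begin{smallmatrix} 0 & I_N \\ D_{\omO} & 0\end{smallmatrix}\bigr)$ with $D_{\omO}$ diagonal, and one can compute iterated brackets explicitly: brackets such as $[X_{\omO},X_{\omO'}]$ produce block-diagonal matrices $\mathrm{diag}(D_{\omO}-D_{\omO'}, D_{\omO'}-D_{\omO})$ up to sign, then further brackets with the $X$'s climb back to the off-diagonal blocks, and so on. By taking enough of the $2^N$ generators (using $N$ of them differing in a single coordinate, say $\omO=0$ and $\omO=e_i$ for each $i$, which suffices to separate the $N$ coordinates) one shows the generated Lie algebra contains all the elementary symplectic matrices $\bigl(\begin{smallmatrix}0 & E_{ii}\\0&0\end{smallmatrix}\bigr)$, $\bigl(\begin{smallmatrix}0&0\\E_{ii}&0\end{smallmatrix}\bigr)$, $\bigl(\begin{smallmatrix}E_{ii}&0\\0&-E_{ii}\end{smallmatrix}\bigr)$, and then, by bracketing these, a generating set for all of $\spN$ (the $\mathfrak{sl}_2$-triples along the coordinate axes together with enough cross terms generate $\spN$). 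This is a finite, if somewhat delicate, linear-algebra computation; the subtlety is checking that the chosen $V_0$ avoids the finitely many degeneracies, which one can always arrange by a dimension count since the bad $V$'s for the explicit computation again form a proper algebraic subset.

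Alternatively, and more economically, once one knows $\mathcal{V}_{(E)}$ is algebraic one only needs non-emptiness of the complement, so it is enough to verify the generation property for a single explicit numerical choice of $V_0$ (depending on $E$, $N$, and the $c_i$); the genericity over $V$ is then automatic from the algebraic dichotomy. I expect the authors to carry out exactly this: reduce to algebraicity via Lemma~\ref{lem_geom} and the affine dependence on $V$, then produce one witness $V_0$ by an explicit bracket computation, the latter being the technical heart of the argument.
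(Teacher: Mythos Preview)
Your overall strategy---pull back the algebraic set $\mathcal{V}_{2^N}$ along the affine map $V\mapsto(X_1(E,V),\ldots,X_{2^N}(E,V))$, invoke the dichotomy that a proper algebraic subset of $\R^{\frac{N(N+1)}{2}}$ has Lebesgue measure zero, and then exhibit a single witness $V_0\notin\mathcal{V}_{(E)}$---is exactly the paper's approach.

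The gap is in your choice of witness. A \emph{diagonal} $V_0$ cannot work for $N\geq 2$. If $V_0$ is diagonal then every $M_{\omO}(E,V_0)$ is diagonal, so each generator has the form $X_{\omO}=\bigl(\begin{smallmatrix}0 & I_N\\ D_{\omO} & 0\end{smallmatrix}\bigr)$ with $D_{\omO}$ diagonal. A direct computation shows that the set of matrices $\bigl(\begin{smallmatrix}A & B\\ C & -A\end{smallmatrix}\bigr)$ with $A,B,C$ all diagonal is a Lie subalgebra of $\spN$ (it is closed under brackets because diagonal matrices commute), and it contains all your generators. This subalgebra is isomorphic to $\bigoplus_{i=1}^N \mathfrak{sl}_2(\R)$, of dimension $3N<2N^2+N$. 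Hence no amount of bracketing produces the ``cross terms'' you allude to: the Lie algebra generated by $\{X_{\omO}(E,V_0)\}$ is proper, and every diagonal $V_0$ actually lies in $\mathcal{V}_{(E)}$.

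The paper sidesteps this by taking $V_0$ to be the tridiagonal matrix with zeros on the diagonal and ones on the super- and sub-diagonals; the off-diagonal entries provide exactly the coupling between coordinates that your diagonal choice lacks. The verification that this $V_0$ works for every $E\in\R$ is not carried out in the present paper but is quoted from \cite[Lemma~3]{boumaza_mpag2}. So your reduction is correct, but the ``explicit bracket computation'' you sketch would fail as written; a non-diagonal interaction matrix is essential.
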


\begin{proof}
Let
\begin{equation}\label{eq_lem_VE_1}
f_E\ :\ \begin{array}{ccl}
\SN & \to & (\spN)^{2^N} \\
V & \mapsto & (X_1(E,V),\ldots, X_{2^N}(E,V))
\end{array}.
\end{equation} 
Then $f_E$ is polynomial in the $\frac{N(N+1)}{2}$ coefficients which define $V$. Indeed, we can identify $\SN \simeq \R^{\frac{N(N+1)}{2}}$ and $(\spN)^{2^N}\simeq \R^{2^N(2N^2+N)}$ and, after this identification, $f_E$ has each of its $2^N(2N^2+N)$ components polynomial in $\R[T_1,\ldots,T_{\frac{N(N+1)}{2}}]$.

\noindent We have :
\begin{equation}\label{eq_lem_VE_2}
\mathcal{V}_{(E)}=f_E^{-1}(\mathcal{V}_{2^N}).
\end{equation}
Then, by Lemma \ref{lem_geom}, $V\in \mathcal{V}_{(E)}$ if and only if  
$$Q_{r_1}(X_1(E,V),\ldots ,X_{2^N}(E,V))=0,\ldots, Q_{r_{2^N}}(X_1(E,V),\ldots ,X_{2^N}(E,V))=0,$$ 
which can be rewrite
\begin{equation}\label{eq_lem_VE_4}
V\in \mathcal{V}_{(E)}\ \Leftrightarrow \ (Q_{r_1}\circ f_E)(V)=0,\ldots,(Q_{r_{2^N}}\circ f_E)(V)=0.
\end{equation}
But, we can prove that, if $V_0$ is the tridiagonal matrix with zeros on the diagonal and all coefficients on its upper and lower diagonals equal to $1$, then, for any $E\in \R$, $V_0\notin \mathcal{V}_{(E)}$ (see \cite[Lemma 3]{boumaza_mpag2}). Thus, there exists $i_0\in \{r_1,\ldots,r_{2^N} \}$ such that $(Q_{i_0} \circ f_E)(V_0) \neq 0$ and, as the function $Q_{i_0} \circ f_E$ is polynomial and do not vanish identically,
\begin{equation}\label{eq_lem_VE_5}
\mathrm{Leb}_{\frac{N(N+1)}{2}}\left( \{ V\in \SN \ |\ (Q_{i_0} \circ f_E)(V)=0) \} \right)=0,
\end{equation}
and, by inclusion,
\begin{equation}\label{eq_lem_VE_6}
\mathrm{Leb}_{\frac{N(N+1)}{2}}(\mathcal{V}_{(E)})=0.
\end{equation}
\end{proof}
\vskip 3mm

\noindent Finally, we can introduce the set :
\begin{equation}\label{eq_def_V}
\mathcal{V}  =  \bigcap_{E\in \R}  \mathcal{V}_{(E)} \qquad \qquad\qquad\qquad\qquad\qquad\qquad\qquad\qquad \qquad\qquad\qquad \qquad
\end{equation}
$$=\left\{ V\in \SN \ |\ \forall E\in \R,\ (X_1(E,V),\ldots,X_{2^N}(E,V))\ \mathrm{does}\ \mathrm{not}\ \mathrm{generate}\ \spN \right\}.$$

\noindent Then, by Lemma \ref{lem_VE_Leb} and by inclusion, we have :
\begin{equation}\label{eq_V_Leb_zero}
\mathrm{Leb}_{\frac{N(N+1)}{2}}(\mathcal{V})=0.
\end{equation}
Now we can prove the last result of this section.

\begin{lem}\label{lem_SV_finite}
For any $V\in \SN \setminus \mathcal{V}$, there exists a finite set $\SV \subset \R$ such that : 
$$\forall E\in \R\setminus \SV,\ (X_1(E,V),\ldots,X_{2^N}(E,V))\ \mathrm{generates}\ \spN.$$
\end{lem}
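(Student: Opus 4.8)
The plan is to exploit the polynomial dependence in the energy $E$. By Lemma~\ref{lem_geom} applied with $k=2^N$, there are fixed polynomials $Q_{r_1},\ldots,Q_{r_{2^N}}\in \R[(\spN)^{2^N}]$, which depend neither on $E$ nor on $V$, such that a $2^N$-tuple $(X_1,\ldots,X_{2^N})\in(\spN)^{2^N}$ generates $\spN$ if and only if $Q_{r_j}(X_1,\ldots,X_{2^N})\neq 0$ for at least one index $j$. First I would record that, for $V$ fixed, the map $E\mapsto (X_1(E,V),\ldots,X_{2^N}(E,V))$ is polynomial in $E$: by (\ref{eq_def_ME}) and (\ref{eq_def_XEV}) every entry of each matrix $X_i(E,V)$ is affine in $E$. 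Substituting these matrices into the polynomials $Q_{r_j}$, the functions
$$P_j(E):=Q_{r_j}(X_1(E,V),\ldots,X_{2^N}(E,V)),\qquad j\in\{1,\ldots,2^N\},$$
are therefore univariate real polynomials in the single variable $E$.

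Next I would translate the failure-to-generate condition into these polynomials. For $E\in\R$, the tuple $(X_1(E,V),\ldots,X_{2^N}(E,V))$ fails to generate $\spN$ --- equivalently, $V\in\mathcal{V}_{(E)}$ in the notation of (\ref{eq_def_VE}) --- if and only if $P_1(E)=\cdots=P_{2^N}(E)=0$. Hence the set of ``bad'' energies is
$$\SV:=\bigcap_{j=1}^{2^N}\left\{\, E\in\R\ |\ P_j(E)=0 \,\right\},$$
and it remains only to show that this set is finite.

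This is the step where the hypothesis $V\notin\mathcal{V}$ enters. Since $\mathcal{V}=\bigcap_{E\in\R}\mathcal{V}_{(E)}$, there exists $E_0\in\R$ with $V\notin\mathcal{V}_{(E_0)}$, that is, $(X_1(E_0,V),\ldots,X_{2^N}(E_0,V))$ generates $\spN$. By the characterization recalled above there is an index $j_0$ with $P_{j_0}(E_0)\neq 0$, so $P_{j_0}$ is a nonzero polynomial in $E$ and consequently has only finitely many real roots. Since $\SV\subset\left\{\, E\in\R\ |\ P_{j_0}(E)=0 \,\right\}$, the set $\SV$ is finite, and by construction $(X_1(E,V),\ldots,X_{2^N}(E,V))$ generates $\spN$ for every $E\in\R\setminus\SV$.

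The argument is essentially a one-variable observation, so I do not expect a genuine obstacle; the only point requiring care is to notice that the polynomials $Q_{r_j}$ produced by Lemma~\ref{lem_geom} are independent of $E$ and $V$, so that plugging in the $E$-affine matrices $X_i(E,V)$ really does yield polynomials in the single variable $E$ --- after which it suffices that one of them be not identically zero, which is exactly the content of $V\notin\mathcal{V}$.
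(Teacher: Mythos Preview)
Your proof is correct and follows essentially the same route as the paper: both compose the polynomials $Q_{r_j}$ from Lemma~\ref{lem_geom} with the affine-in-$E$ map $E\mapsto(X_1(E,V),\ldots,X_{2^N}(E,V))$, use $V\notin\mathcal{V}$ to find an $E_0$ at which some $P_{j_0}$ is nonzero, and conclude from the finiteness of the real roots of $P_{j_0}$. The only cosmetic difference is that the paper takes $\SV$ to be the zero set of the single $P_{j_0}$, whereas you take the (possibly smaller) intersection $\bigcap_j\{P_j=0\}$, which is the exact set of bad energies.
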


\begin{proof}
Let $V\in \SN \setminus \mathcal{V}$. Then, there exists $E_0 \in \R$ such that the family  $(X_1(E_0,V),\ldots,X_{2^N}(E_0,V))$ generates $\spN$. Thus, there exists $i_0 \in  \{r_1,\ldots,r_{2^N} \}$ such that $(Q_{i_0} \circ f)(E_0,V) \neq 0$, where 
\begin{equation}\label{eq_lem_SV_1}
f\ :\ \begin{array}{ccl}
\R \times \SN & \to & (\spN)^{2^N} \\
(E,V) & \mapsto & (X_1(E,V),\ldots, X_{2^N}(E,V))
\end{array}.
\end{equation}
But, for $V$ fixed, $E\mapsto (Q_{i_0} \circ f)(E,V)$ is polynomial and, as it is not identically vanishing, it has only a finite set $\SV$ of roots. Thus, we have :
\begin{equation}\label{eq_lem_SV_2}
\forall E\in \R \setminus \SV,\ (Q_{i_0} \circ f)(E,V) \neq 0,
\end{equation}
which is equivalent to :
\begin{equation}\label{eq_lem_SV_3}
\forall E\in \R \setminus \SV,\ (X_1(E,V),\ldots, X_{2^N}(E,V)) \notin \mathcal{V}_{2^N}.
\end{equation}
\end{proof}
\vskip 3mm

\noindent With this Lemma \ref{lem_SV_finite} we are now able to prove Proposition \ref{prop_GE_SpN}.

\section{Proof of Proposition \ref{prop_GE_SpN} and Corollary \ref{cor_lyap_exp}}\label{sec_proof_prop1}

The proof of Proposition \ref{prop_GE_SpN} is based upon a general criterion of density in semisimple Lie groups due to Breuillard and Gelander.

\begin{thm}[\cite{BG03}, Theorem 2.1]\label{thm_BG03}
Let $G$ be a real, connected, semisimple Lie group, whose Lie algebra is $\mathfrak{g}$. Then, there is a neighborhood $\mathcal{O}$ of $1$ in $G$, on which $\log=\exp^{-1}$ is a well defined diffeomorphism, such that $g_{1},\ldots,g_{m}\in \mathcal{O}$ generate a dense subgroup whenever $\log g_{1},\ldots,\log g_{m}$ generate $\mathfrak{g}$.
\end{thm}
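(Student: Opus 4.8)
The plan is to pass to the closure and linearise. Set $H=\overline{\langle g_1,\dots,g_m\rangle}$; by Cartan's closed subgroup theorem $H$ is an embedded Lie subgroup of $G$ with Lie algebra $\mathfrak h:=\mathrm{Lie}(H)=\mathrm{Lie}(H^0)$, and it suffices to prove $\mathfrak h=\mathfrak g$, since then, $G$ being connected, $H^0=G$ and hence $H=G$, which is the asserted density. First I would show that \emph{$\mathfrak h$ is an ideal of $\mathfrak g$}. Each $g_i\in H$ normalises the identity component $H^0$, so $\mathrm{Ad}(g_i)$ preserves $\mathfrak h$. If $\mathcal O$ is small enough then $X_i:=\log g_i$ is so small that $\mathrm{Ad}(g_i)=\exp(\mathrm{ad}\,X_i)$ is close to $\mathbf 1$ in $\mathrm{GL}(\mathfrak g)$ and its principal logarithm equals $\mathrm{ad}\,X_i$; since the principal logarithm of an operator close to $\mathbf 1$ is a norm limit of polynomials in that operator, it preserves every subspace the operator preserves, whence $\mathrm{ad}\,X_i$ preserves $\mathfrak h$, i.e.\ $X_i\in\mathfrak n_{\mathfrak g}(\mathfrak h)$. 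As the $X_i$ generate $\mathfrak g$ as a Lie algebra by hypothesis, $\mathfrak n_{\mathfrak g}(\mathfrak h)=\mathfrak g$, so $\mathfrak h$ is an ideal.

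Here semisimplicity enters decisively: $\mathfrak g$ has only finitely many ideals (sums of its finitely many simple summands), hence only finitely many possibilities for $\mathfrak h$, for the closed normal subgroup $H^0$ (the unique connected subgroup with Lie algebra $\mathfrak h$), and for the connected semisimple quotient $\bar G:=G/H^0$, with $\mathrm{Lie}(\bar G)=\mathfrak g/\mathfrak h$ and projection $\pi\colon G\to\bar G$. Suppose, for contradiction, that $\mathfrak h\neq\mathfrak g$, so $\bar G$ is nontrivial. Then $\bar H:=\pi(H)=H/H^0$ is a discrete subgroup of $\bar G$ (a closed subgroup with zero-dimensional Lie algebra), generated by the elements $\bar g_i:=\pi(g_i)=\exp_{\bar G}(\overline{X_i})$, where $\overline{X_i}$ denotes the image of $X_i$ in $\mathfrak g/\mathfrak h$; these lie in an arbitrarily small neighborhood of $1$ in $\bar G$ if $\mathcal O$ is small. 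Choosing $\mathcal O$ so that its image in each of the finitely many $\bar G$ sits inside a Zassenhaus neighborhood, the Zassenhaus--Kazhdan--Margulis lemma gives a connected nilpotent subgroup $\bar N\leq\bar G$ with $\bar H\subseteq\bar N$. Rerunning the linearisation argument inside $\bar G$ (each $\bar g_i\in\bar N$, so $\mathrm{Ad}_{\bar G}(\bar g_i)$ preserves $\mathfrak n:=\mathrm{Lie}(\bar N)$, hence so does $\mathrm{ad}\,\overline{X_i}$) and using that the $\overline{X_i}$ generate $\mathfrak g/\mathfrak h$, one gets $\mathfrak g/\mathfrak h\subseteq\mathfrak n_{\mathfrak g/\mathfrak h}(\mathfrak n)$, so $\mathfrak n$ is an ideal of the semisimple $\mathfrak g/\mathfrak h$; a nilpotent ideal of a semisimple Lie algebra is zero, so $\bar N=\{1\}$, hence $\bar H=\{1\}$ and $H=H^0$ is connected.

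To close the argument, $H=H^0$ gives $H\subseteq\ker\pi$, so $\exp_{\bar G}(\overline{X_i})=\pi(g_i)=1$ for all $i$; as $\overline{X_i}$ is small and $\exp_{\bar G}$ is a local diffeomorphism at $0$, this forces $\overline{X_i}=0$, i.e.\ $X_i\in\mathfrak h$. Then $\mathfrak g=\mathrm{Lie}\{X_1,\dots,X_m\}\subseteq\mathfrak h$, contradicting $\mathfrak h\neq\mathfrak g$. Therefore $\mathfrak h=\mathfrak g$, $H=G$, and $\langle g_1,\dots,g_m\rangle$ is dense. The neighborhood $\mathcal O$ is finally taken as the intersection of a neighborhood on which $\log=\exp^{-1}$ is a diffeomorphism, a neighborhood small enough for the two linearisation steps, and the preimages under the finitely many $\pi$ of Zassenhaus neighborhoods of the groups $\bar G$; this is a finite intersection, hence again a neighborhood of $1$.

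The step I expect to be the main obstacle, and the one where semisimplicity is indispensable, is the \emph{uniformity} of $\mathcal O$: a priori the good neighborhood would depend on the subgroup $H$ that happens to arise, and the whole scheme works only because the relevant data $\mathfrak h$ — and with it $H^0$ and $\bar G$ — range over a finite list, so that a single $\mathcal O$ can be chosen once and for all. A minor recurring nuisance is the careful bookkeeping with $\log\circ\exp=\mathrm{id}$ near $0$ and with the elementary fact that the principal logarithm of an operator near $\mathbf 1$ preserves whatever subspaces the operator does.
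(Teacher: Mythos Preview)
The paper does not prove this statement at all: Theorem~\ref{thm_BG03} is quoted verbatim from Breuillard--Gelander \cite{BG03} and used as a black box in the proof of Proposition~\ref{prop_GE_SpN}. There is therefore no ``paper's own proof'' to compare against.

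That said, your sketch is sound and is essentially the argument of \cite{BG03}. The two structural pillars are exactly the right ones: (i) the linearisation step $\mathrm{Ad}(g_i)\mathfrak h\subseteq\mathfrak h\Rightarrow(\mathrm{ad}\,X_i)\mathfrak h\subseteq\mathfrak h$ via the principal logarithm, which upgrades ``$g_i$ normalises $H^0$'' to ``$\mathfrak h$ is an ideal'', and (ii) the Zassenhaus--Kazhdan--Margulis lemma in the semisimple quotient $G/H^0$, which traps the discrete image of $H$ in a connected nilpotent subgroup that must then be trivial. Your handling of the uniformity issue---that only finitely many ideals $\mathfrak h$, hence finitely many quotients $G/H^0$, can occur, so a single $\mathcal O$ works---is precisely the point where semisimplicity is used in \cite{BG03} and is the heart of the matter. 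The only places to tighten in a full write-up are the routine verifications you flag yourself: that $H^0$ is closed and normal in $G$ (closed as the identity component of a closed subgroup; normal because $G$ is connected and $\mathfrak h$ is an ideal), and that $\pi(H)=H/H^0$ is closed in $G/H^0$ (since $\pi^{-1}(\pi(H))=H$ is closed and $\pi$ is open), hence genuinely discrete.
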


This criterion, applied to $G=\SpN$, gives us the outline of the proof of Proposition \ref{prop_GE_SpN} :
\begin{itemize}
\item[(i)] We construct $\ell_C$ and $I(N,V,\ell)$ such that, for $\ell \in (0,\ell_C)$ and $E\in I(N,V,\ell)$, $T_{\omO}(E)\in \mathcal{O}$, for every $\omO \in \{0,1\}^N$.
\item[(ii)] We compute $\log T_{\omO}(E)$.
\item[(iii)] We justify that $\mathrm{Lie}\{\log T_{\omO}(E)\ |\ \omO \in \{0,1\}^N \}=\spN$ for $V\in \SN \setminus \mathcal{V}$ and $E\in I(N,V,\ell) \setminus \SV$.
\item[(iv)] We deduce that $G_{\{0,1\}}(E)$ is dense for the usual topology in $\SpN$, for $E\in I(N,V,\ell) \setminus \SV$.
\end{itemize}
\vskip 3mm

\begin{proof}
We fix $V\in \SN \setminus \mathcal{V}$. We start be constructing $\ell_C$ and, for $\ell \in (0,\ell_C)$, the interval $I(N,V,\ell)$ as given in (\ref{eq_def_lCV}) and (\ref{eq_def_INVl}). Now, let $\lambda_1^{\omO}$,$\ldots$, $\lambda_N^{\omO}$ be the real eigenvalues of $M_{\omO}(0,V)$ (see (\ref{eq_def_ME})). Then, the eigenvalues of $X_{\omO}(E,V) ^tX_{\omO}(E,V)$ are $1$, $(\lambda_1^{\omO}-E)^2$, $\ldots$, $(\lambda_N^{\omO}-E)^2$, thus :
\begin{equation}\label{eq_prop1_1}
||X_{\omO}(E,V)||=\max \left(1, \max_{1\leq i \leq N} |\lambda_i^{\omO}-E|\right),
\end{equation}
where $||\ ||$ is the matrix norm associated to the euclidian norm on $\R^{2N}$.

\noindent Let $\mathcal{O}$ be the neighborhood of the identity given by Theorem \ref{thm_BG03} applied to the group $G=\SpN$. Then, for $\dlO$ as defined in (\ref{eq_def_dlogO}), we take $\ell \leq \dlO$ and we set  $r_{\ell}=\frac{1}{\ell}\dlO \geq 1$. If we set
\begin{equation}\label{eq_prop1_2}
I(N,V,\ell)=\left\{E\in \R\ \bigg|\  \max \left(1,\max_{\omO\in \{ 0,1\}^N} \max_{1\leq i\leq N} |\lambda_i^{\omO}-E|\right) \leq r_{\ell} \right\},
\end{equation}
as $r_{\ell} \geq 1$,
\begin{equation}\label{eq_prop1_3}
I(N,V,\ell)=\bigcap_{\omO\in \{ 0,1\}^N} \bigcap_{1\leq i\leq N} [\lambda_i^{\omO}-r_{\ell}, \lambda_i^{\omO}+r_{\ell}].
\end{equation}

\noindent Let $\lambda_{\mathrm{min}}$, $\lambda_{\mathrm{max}}$ and $\delta$ be as in (\ref{eq_def_lambda_minmax}). If $\delta <r_{\ell}$ then $I(N,V,\ell)\neq \emptyset$ and we have 
\begin{equation}\label{eq_prop1_4}
I(N,V,\ell)=[\lambda_{\mathrm{max}}-r_{\ell},\lambda_{\mathrm{min}}+r_{\ell}],
\end{equation}
which is the definition we took in (\ref{eq_def_INVl}). This interval is centered in $\frac{\lambda_{\mathrm{min}}+\lambda_{\mathrm{max}}}{2}$ and is of length $2r_{\ell}-2\delta >0$, which tends to $+\infty$ when $\ell$ tends to $0^+$. We also note that $\lambda_{\mathrm{min}}$, $\lambda_{\mathrm{max}}$ and $\dlO$ depend only on $N$ and $V$ and thus $I(N,V,\ell)$ depends only on $N$, $V$ and $\ell$. Finally, the condition $\delta <r_{\ell}$, which ensures that $I(N,V,\ell)\neq \emptyset$, is equivalent to
$$0 < \ell <\frac{\dlO}{\delta}=\ell_C(N,V).$$
\noindent So, we have just proved that, 
\begin{equation}\label{eq_prop1_5}
\forall \ell \in (0,\ell_C),\ \forall E\in I(N,V,\ell),\ \forall \omO \in \{0,1\}^N,\ 0<\ell ||X_{\omO}(E,V)|| \leq \dlO.
\end{equation}
Thus, for every $\ell \in (0,\ell_C)$ and every $E\in I(N,V,\ell)$,
\begin{equation}\label{eq_prop1_6}
\forall \omO \in \{0,1\}^N,\ \ell X_{\omO}(E,V) \in \log \mathcal{O}.
\end{equation}
From this, we deduce that,
\begin{equation}\label{eq_prop1_7}
\forall \ell \in (0,\ell_C),\ \forall E\in I(N,V,\ell),\ \forall \omO \in \{0,1\}^N,\ T_{\omO}(E) \in \mathcal{O}.
\end{equation}
We actually get more from (\ref{eq_prop1_6}). As \emph{exp} is a diffeomorphism from $\log \mathcal{O}$ into $\mathcal{O}$, we also have :
\begin{equation}\label{eq_prop1_8}
\forall \ell \in (0,\ell_C),\ \forall E\in I(N,V,\ell),\ \forall \omO \in \{0,1\}^N,\ \log\,T_{\omO}(E)=\ell X_{\omO}(E,V).
\end{equation}
But, from the beginning, we choosed $V\in \SN \setminus \mathcal{V}$ and, by Lemma \ref{lem_SV_finite}, there exists $\SV \subset \R$ finite such that
\begin{equation}\label{eq_prop1_9}
\forall E\in \R\setminus \SV,\ \mathrm{Lie} \{ X_{\omO}(E,V) \ |\ \omO \in \{0,1\}^N \}=\spN.
\end{equation}
Now, by (\ref{eq_prop1_8}) and (\ref{eq_prop1_9}), as $\ell \in (0,\ell_C)$ is different from $0$,
\begin{equation}\label{eq_prop1_10}
\forall \ell \in (0,\ell_C),\ \forall E\in I(N,V,\ell)\setminus \SV,\ \mathrm{Lie} \{ \log\, T_{\omO}(E) \ |\ \omO \in \{0,1\}^N \}=\spN.
\end{equation}
By applying Theorem \ref{thm_BG03}, we obtain that
\begin{equation}\label{eq_prop1_11}
\forall \ell \in (0,\ell_C),\ \forall E\in I(N,V,\ell)\setminus \SV, G_{\{0,1\}}(E)\ \mathrm{is}\ \mathrm{dense}\ \mathrm{in}\ \SpN.
\end{equation}
Now, as the F\"urstenberg group $G(E)$ is the closure of $G_{\{0,1\}}(E)$, we get :
\begin{equation}\label{eq_prop1_12}
\forall \ell \in (0,\ell_C),\ \forall E\in I(N,V,\ell)\setminus \SV, G(E)=\SpN.
\end{equation}
We have proved Proposition \ref{prop_GE_SpN} because $\mathcal{V}$ is of Lebesgue measure $0$ (see (\ref{eq_V_Leb_zero})) and $\SV$ is finite.
\end{proof}

\noindent We deduce Corollary \ref{cor_lyap_exp} by using the fact that, for $\ell$ and $E$ such that $G(E)=\SpN$, $G(E)$ is \LpSI\ (see \cite[Definitions A.IV.3.3 and A.IV.1.1]{BL85} for the definitions of these notions). Thus, by \cite[Proposition IV.3.4]{BL85}, we get the separability and the positivity of the Lyapunov exponents $\gamma_1(E),\ldots,\gamma_N(E)$ (see (\ref{eq_cor_lyap_exp})). Because $\SV$ is finite, it is of Lebesgue measure zero in $\R$ and we can apply Kotani's theory (see \cite{KS88}) to prove the absence of absolutely continuous spectrum in $I(N,V,\ell)$, for $\ell \in (0,\ell_C)$ and $V\in \SN \setminus \mathcal{V}$, which finish to prove Corollary \ref{cor_lyap_exp}.

\begin{rem}\label{rem_holder_lyap}
We also note that, by applying \cite[Theorem 2]{boumaza_rmp}, we get that the functions $E\mapsto \gamma_p(E)$ for $p\in \aN$ are H\"older continuous on every compact interval $I\subset I(N,V,\ell)\setminus \SV$, for $\ell \in (0,\ell_C)$ and $V\in \SN \setminus \mathcal{V}$.
\end{rem}

\section{Proof of Theorem \ref{thm_localization}}\label{sec_proof_loc}

\noindent Using Proposition \ref{prop_GE_SpN}, Theorem \ref{thm_localization} will be a consequence of the following result.

\begin{thm}[Theorem 1, \cite{boumaza_mpag2}]\label{thm_loc_mpag2}
Let $I\subset \R$ be a compact interval such that $\Sigma\cap I\neq \emptyset$ and let $\tilde{I}$ be an open interval, $I\subset \tilde{I}$, such that, for every $E\in \tilde{I}$, $G(E)$ is \LpSI. Then, $\Hl$ exhibits (EL) and (SDL) in $I$.
\end{thm}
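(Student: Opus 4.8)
The plan is to deduce (EL) and (SDL) from a multiscale analysis for the matrix Schrödinger operator $\Hl$, following the scheme developed by Damanik, Sims and Stolz \cite{DSS02} in the scalar continuum Bernoulli case, and to supply its two probabilistic ingredients — positivity of the Lyapunov exponents and Hölder regularity of the Lyapunov exponents and of the integrated density of states — from the hypothesis on the Fürstenberg group. The bridge between spectral behaviour and transfer matrices is the usual one: by a Sch'nol-type theorem the spectral measure of $\Hl$ is carried, $\mathsf P$-almost surely, by the set of energies admitting a polynomially bounded generalized eigenfunction, and through the cocycle such an eigenfunction corresponds to a polynomially bounded orbit of the products $T_{\omega^{(n-1)}}(E)\cdots T_{\omega^{(0)}}(E)$. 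Now, since $I$ is a compact subset of the open set $\tilde I$ on which $G(E)$ is $p$-contracting and $L_p$-strongly irreducible for every $p\in\{1,\ldots,N\}$, \cite[Proposition IV.3.4]{BL85} gives $\gamma_1(E)>\cdots>\gamma_N(E)>0$ for every $E\in\tilde I$ (the step already used for Corollary \ref{cor_lyap_exp}); using $\gamma_{2N-i+1}(E)=-\gamma_i(E)$, the $2N$ exponents split as $\gamma_1>\cdots>\gamma_N>0>-\gamma_N>\cdots>-\gamma_1$, so the symplectic cocycle has, for each $E\in\tilde I$, a spectral gap of size $\gamma_N(E)$ around $0$ and hence, by the Oseledets theorem, an exponential dichotomy of the transfer matrices with rates bounded below by $\gamma_N(E)$. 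This forces a polynomially bounded generalized eigenfunction to lie in the stable direction and decay exponentially; together with Kotani theory (as in the proof of Corollary \ref{cor_lyap_exp}) it already gives $\Sigma_{\mathrm{ac}}\cap\tilde I=\emptyset$, but ruling out singular continuous spectrum and producing the $\mathsf P$-almost sure exponential decay and the dynamical statement requires the quantitative analysis below.

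The next step is to secure the regularity inputs: the same contraction and strong irreducibility, with the i.i.d.\ bounded-support structure, let Le Page's theorem in the continuum symplectic setting (cf.\ \cite[Theorem 2]{boumaza_rmp}, invoked in Remark \ref{rem_holder_lyap}) give that each $E\mapsto\gamma_p(E)$ is Hölder continuous on $I$; in particular $\gamma_-:=\inf_{E\in I}\gamma_N(E)>0$. A Thouless-type formula relating $\sum_{i=1}^N\gamma_i(E)$ to the integrated density of states $\mathcal N(\cdot)$ of $\Hl$ transfers this to Hölder continuity of $\mathcal N$ on $I$. From $\mathcal N$ Hölder one then obtains a Wegner-type estimate — the probability that the Dirichlet restriction of $\Hl$ to a box of length $\ell L$ has spectrum in a small interval $J\subset I$ is bounded by a power of $|J|$ times a power of $L$ — by the mechanism of \cite{DSS02}: a spectral-shift argument (pushing a finite-volume eigenvalue across $J$ by flipping a single Bernoulli variable, using unique continuation for the matrix Schrödinger operator, which holds because each single-site term is a genuine bump) combined with the Hölder control of the eigenvalue count through $\mathcal N$. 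This is precisely the step where the one-dimensionality is decisive and the Bernoulli (or merely bounded) single-site law, fatal in dimension $\geq 2$, is harmless.

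It then remains to produce the initial length scale estimate and run the analysis. From $\gamma_->0$ and a Le Page / Furstenberg--Kifer large deviation bound for $\frac1n\log\|T_{\omega^{(n-1)}}(E)\cdots T_{\omega^{(0)}}(E)\|$, made uniform in $E$ over the compact $I$ by a finite covering using the continuity just obtained, one gets: for $L$ large and a suitable subinterval $I_0\subset I$, the finite-volume resolvent of the Dirichlet restriction of $\Hl$ to $[0,\ell L]$ from the boundary to the centre has norm $\le e^{-\gamma_-L/2}$ with probability $\ge 1-L^{-\xi}$, for any prescribed $\xi>0$. Feeding this and the Wegner-type estimate into the bootstrap multiscale analysis of Germinet--Klein, in the continuum form for operators on $L^2(\R)\otimes\R^N$ — the standing prerequisites (compactness of the box resolvents, Combes--Thomas decay, the geometric resolvent identity, the generalized eigenfunction expansion) all hold since $\Hl$ is a bounded perturbation of $-\frac{\dd^2}{\dd x^2}\otimes I_{\mathrm{N}}$ — one obtains the multiscale conclusion on a subinterval of $I$, hence, covering $I$ by finitely many such intervals, on all of $I$: pure point spectrum with exponentially decaying eigenfunctions, i.e.\ (EL), and, via the SULE/SUDEC criteria, strong dynamical localization, i.e.\ (SDL). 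Both are non-vacuous because $\Sigma\cap I\neq\emptyset$ by hypothesis. (A fractional-moment argument would serve equally well as the engine of this last step.)

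The main obstacle is the Wegner-type estimate with a Bernoulli single-site distribution: in dimension $\geq 2$ this is a hard problem, and the whole scheme relies on circumventing it through the quasi-one-dimensional spectral-shift argument of \cite{DSS02} together with the Hölder regularity of $\mathcal N$ coming from Le Page's theorem and the Thouless formula — one must check that this argument survives the passage from scalar to $\R^N$-valued potentials (unique continuation, and the eigenvalue-count versus IDS comparison, for the matrix Schrödinger operator). A secondary, more technical difficulty is making the large deviation estimate uniform in the energy over $I$ (handled by Hölder continuity and a finite covering), and, throughout, transporting the standard continuum multiscale machinery to the $\R^N$-valued setting, which is routine but requires some care with the trace-class and spectral-projection bounds.
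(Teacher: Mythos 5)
Your proposal follows essentially the same route as the paper, which itself only outlines the argument in five steps (integral representation/positivity and H\"older continuity of the Lyapunov exponents, the same regularity for the integrated density of states, a Wegner estimate built on that H\"older regularity, and multiscale analysis) and defers the full proof to \cite{boumaza_mpag2}. Your sketch fills in the same ingredients (Damanik--Sims--Stolz--type spectral-averaging Wegner bound, Le Page regularity, Thouless formula, initial-scale large deviations, Germinet--Klein bootstrap MSA) in more detail, but does not diverge in strategy.
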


To prove this result we had to :

\begin{enumerate}[1.]
\item Obtain an integral representation of the Lyapunov exponents of $\Hl$ which, in particular, implies their positivity.
\item Deduce from this integral representation some H\"older regularity of the Lyapunov exponents (see Remark \ref{rem_holder_lyap}).
\item Show that the integrated density of states of $\Hl$ has the same H\"older regularity (see Proposition \ref{prop_holder_ids}).
\item Prove a Wegner estimate using the H\"older regularity of the integrated density of states.
\item Obtain (EL) and (SDL) by using multiscale analysis.
\end{enumerate}

\begin{proof}[Proof of Theorem \ref{thm_localization}]
Let $V\in \SN \setminus \SV$ and assume that $\ell \in (0,\ell_C)$. Let $\tilde{I} \subset  I(N,V,\ell)\setminus \SV$ be an open interval such that there exists $I\subset \tilde{I}$, a compact interval with $\Sigma\cap I\neq \emptyset$. If we take $\ell$ small enough, as the intervals $I(N,V,\ell)$ tends to $\R$, we can always find such intervals $\tilde{I}$ and $I$. Now, as $\tilde{I} \subset  I(N,V,\ell)\setminus \SV$, by Proposition \ref{prop_GE_SpN}, for every $E\in \tilde{I}$, $G(E)=\SpN$. Thus, we can apply Theorem \ref{thm_loc_mpag2} to obtain that $\Hl$ exhibits (EL) and (SDL) in $I$, which proves Theorem \ref{thm_localization}.
\end{proof}

\section{Results on the integrated density of states}\label{sec_ids}

The integrated density of states is the distribution function of the energy levels of $\Hl$, per unit volume. To define it properly, we first need to restrict the operator $\Hl$ to finite length intervals. Let $L\geq 1$ be an integer and let $H_{\ell}^{(L)}(\omega)$ be the restriction of $\Hl$ to $L^2([-\ell L,\ell L])\otimes \R^N$, with Dirichlet (or Neumann) boundary conditions at $\pm \ell L$.

\begin{defi}\label{def_ids}
The integrated density of states associated to $\Hl$ is the function from $\R$ to $\R_{+}$, $E\mapsto N(E)$, where $N(E)$, for $E\in \R$, is defined as : 
\begin{equation}\label{eq_def_ids}
N(E)=\lim_{L\to +\infty} \frac{1}{2\ell L} \# \{ \lambda \leq E |\ \lambda \in \sigma(H_{\ell}^{(L)}(\omega)) \},
\end{equation}
for $\mathsf{P}$-almost every $\omega \in \Omega$.
\end{defi}
\vskip 2mm

\noindent For the integrated density of states associated to $\Hl$, we have the following results.

\begin{prop}\label{prop_holder_ids}
$(1)$ For any $V\in \SN$, $\ell >0$ and $E\in \R$, the limit (\ref{eq_def_ids}) exists and is $\mathsf{P}$-almost surely independent of $\omega \in \Omega$.

\noindent $(2)$ Let $V\in \SN \setminus \mathcal{V}$ and $\ell \in (0,\ell_C)$. Let $I \subset  I(N,V,\ell)\setminus \SV$ be an open interval. Then the integrated density of states of $\Hl$, $E\mapsto N(E)$, is H\"older continuous on $I$.
\end{prop}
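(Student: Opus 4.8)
The plan is to treat the two assertions separately, since they draw on quite different inputs. For part $(1)$, the existence of the thermodynamic limit (\ref{eq_def_ids}) and its $\mathsf{P}$-almost sure non-randomness is a general fact for $\ell\Z$-ergodic Schrödinger-type operators, and does not require any spectral information about $G(E)$. I would invoke the $\ell\Z$-ergodicity of the family $\{\Hl\}_{\omega\in\Omega}$ already established in the introduction, together with the standard superadditive/Birkhoff ergodic argument: writing $N_L^{\mathrm D}(E)$ and $N_L^{\mathrm N}(E)$ for the (normalized) eigenvalue counting functions of $H_\ell^{(L)}(\omega)$ with Dirichlet and Neumann conditions, Dirichlet--Neumann bracketing shows that $2\ell L\, N_L^{\mathrm D}(E)$ is superadditive and $2\ell L\, N_L^{\mathrm N}(E)$ is subadditive along the block decomposition into translates of the cell of length $\ell$; the Akcoglu--Krengel (or Kingman) subadditive ergodic theorem then gives a common non-random limit $N(E)$ for $\mathsf{P}$-a.e.\ $\omega$, independent of the boundary condition. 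I would simply cite the relevant reference (e.g.\ \cite{KS88} or the corresponding statement in \cite{boumaza_mpag2}) rather than redoing this.

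For part $(2)$, the key observation is that Hölder continuity of $N$ on an interval follows from a \emph{Wegner estimate} that is Hölder in the energy, and such a Wegner estimate is exactly one of the ingredients listed in the proof outline of Theorem \ref{thm_loc_mpag2} (step 4, obtained from the Hölder regularity of the IDS). So the structure is: first establish Hölder regularity of the Lyapunov exponents on compact subintervals of $I(N,V,\ell)\setminus\SV$, then transfer this regularity to the IDS via the Thouless formula, then note that these together with the $L_p$-strong irreducibility and $p$-contractivity of $G(E)$ furnish the Wegner estimate, whose form directly expresses the Hölder modulus of $N$.

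Concretely, I would proceed as follows. Fix $V\in\SN\setminus\mathcal V$ and $\ell\in(0,\ell_C)$, and let $I\subset I(N,V,\ell)\setminus\SV$ be open. By Proposition \ref{prop_GE_SpN}, $G(E)=\SpN$ for every $E\in I$, hence $G(E)$ is \LpSI\ on $I$. Then Remark \ref{rem_holder_lyap}, which applies \cite[Theorem 2]{boumaza_rmp}, gives that each $E\mapsto\gamma_p(E)$, $p\in\aN$, is Hölder continuous on every compact $I'\subset I$; combined with the symmetry $\gamma_{2N-i+1}=-\gamma_i$, all $2N$ Lyapunov exponents are Hölder on $I'$. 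Next, the IDS and the sum of the positive Lyapunov exponents are linked by a Thouless-type formula (the $w$-function / $W$-function identity used in \cite{boumaza_mpag2}), which realizes $N$ as the imaginary part (boundary value) of a Herglotz function whose real part is controlled by $\sum_{p=1}^N\gamma_p$; standard Hilbert-transform/harmonic-conjugate estimates then convert the Hölder regularity of $\sum_p\gamma_p$ into Hölder regularity of $N$ on $I'$. Since $I=\bigcup I'$ exhausted by such compact subintervals and the Hölder property is local, $N$ is Hölder on $I$.

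The main obstacle is the passage from Hölder regularity of the Lyapunov exponents to Hölder regularity of the IDS. The Thouless formula for matrix-valued (strip) operators is more delicate than in the scalar case — one is taking a harmonic conjugate of $\sum_{p=1}^N\gamma_p(E)$, and Hölder continuity is not automatically preserved by the Hilbert transform unless one has uniform control of the modulus of continuity up to the endpoints of $I'$ and a bound away from the (countable) set where individual $\gamma_p$ could collide. Here the separation $\gamma_1(E)>\cdots>\gamma_N(E)>0$ from Corollary \ref{cor_lyap_exp} on $I(N,V,\ell)\setminus\SV$ is what keeps the relevant quantities smooth enough, but one must check that the implied constants in the Hölder bounds of \cite{boumaza_rmp} are locally uniform in $E$. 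Granting that, and citing the corresponding argument in \cite{boumaza_mpag2} where this chain (Lyapunov regularity $\Rightarrow$ IDS regularity $\Rightarrow$ Wegner estimate) is carried out in detail, the proof is complete.
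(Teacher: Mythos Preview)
Your concrete plan is sound, but two remarks are in order.

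First, the paper's own proof is much terser than yours: it simply invokes Corollary~1 of \cite{boumaza_rmp} for part~(1) and Theorem~4 of \cite{boumaza_rmp} for part~(2), after checking via Proposition~\ref{prop_GE_SpN} that $G(E)=\SpN$ (hence $p$-contracting and $L_p$-strongly irreducible) on $I$. Your detailed outline for~(2) --- H\"older regularity of the Lyapunov exponents, then transfer to $N(E)$ via a Thouless-type formula --- is essentially an unpacking of what lies inside the proof of that Theorem~4, so the two approaches agree in substance; the paper just treats the result as a black box.

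Second, your opening paragraph for part~(2) contains a circularity you should remove: you write that H\"older continuity of $N$ ``follows from a Wegner estimate,'' and then note that in the outline of Theorem~\ref{thm_loc_mpag2} the Wegner estimate is itself \emph{obtained from} the H\"older regularity of the IDS. The logical order in this paper (and in \cite{boumaza_mpag2}) is Lyapunov regularity $\Rightarrow$ IDS regularity $\Rightarrow$ Wegner estimate, not the reverse; your later ``concrete'' plan respects this order, so just drop the Wegner sentence. Also, your stated ``main obstacle'' is less serious than you suggest: the Hilbert transform does preserve H\"older classes $C^\alpha$ for $0<\alpha<1$ (a classical fact), so once the strip Thouless formula of Kotani--Simon type is in hand and $\sum_{p=1}^N\gamma_p$ is locally H\"older, the passage to $N$ is routine; the separability $\gamma_1>\cdots>\gamma_N>0$ is used upstream (for the integral representation and regularity of the individual $\gamma_p$), not at the harmonic-conjugate step.
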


\begin{proof}
For point $(1)$, we directly apply  Corollary $1$ in \cite{boumaza_rmp}. For point $(2)$, we use the fact that, for $E\in I$, with $I \subset I(N,V,\ell)\setminus \SV$, we have $G(E)=\SpN$ and thus, $G(E)$ is \LpSI. As $I(N,V,\ell)$ is compact, we can directly apply to $I$ the Theorem 4 of \cite{boumaza_rmp} which proves that $\Hl$ is H\"older continuous on $I$.
\end{proof}



\end{document}